\theoremstyle{plain}
\theoremstyle{definition}
\newtheorem{theorem}{Theorem}[section]
\newtheorem*{theorem*}{Theorem \ref{KirkMain}}
\newtheorem{ex}[theorem]{Example}
\newtheorem{defn}[theorem]{Definition}
\newtheorem{notn}[theorem]{Notation}
\newtheorem{rmk}[theorem]{Remark}
\newtheorem{conv}[theorem]{Convention}
\newtheorem{lemma}[theorem]{Lemma}
\newtheoremstyle{case}{}{}{}{}{}{:}{ }{}
\theoremstyle{case}
\newtheorem{case}{Case}
\makeatletter\@addtoreset{case}{theorem}\@addtoreset{case}{lemma}\makeatother
\newcommand{\wtilde}[1]{{\stackrel{\sim}{\smash{#1}\rule{0pt}{1.1ex}}}} 
\newcommand{\Cone}{\textnormal{Cone}} 
\newcommand{\Hom}{\textnormal{Hom}} 
\newcommand{\Decat}{\textnormal{Decat}} 
\newcommand{\Cob}{\mathbf{Cob}} 
\newcommand{\rank}{\textnormal{rank}} 
\newcommand{\grab}{\mathbf{grAb}}
\newcommand{\nTQFTR}{n\mathbf{TQFT}_R}
\newcommand{\TQFTR}{2\mathbf{TQFT}_R}
\newcommand{\cFAR}{\mathbf{cFA}_R}
\newcommand{\kvect}{\Bbbk\textnormal{-}\mathbf{Vect}} 
\newcommand{\Rmod}{R\textnormal{-}\mathbf{mod}}
\newcommand{\Zndiag}{\Z^n_+\textnormal{-}\mathbf{diag}} 
\newcommand{\Ob}{\textnormal{Ob}} 
\newcommand{\Mor}{\textnormal{Mor}} 
\newcommand{\N}{\mathbb{N}} 
\newcommand{\R}{\mathbb{R}} 
\newcommand{\Z}{\mathbb{Z}} 
\newcommand{\Q}{\mathbb{Q}} 
\newcommand{\vcplx}{\mathcal{C}^*(D,\vec{x})} 
\newcommand{\vhcplx}{\mathcal{H}^*(D,\vec{x})} 
\newcommand{\vkhcplx}{\mathcal{H}^k(D,\vec{x})} 
\newcommand{\tvcplx}{\mathcal{C}^*(T_{2,n},\vec{x})} 
\newcommand{\tvkcplx}{\mathcal{C}^k(T_{2,n},\vec{x})} 
\newcommand{\tvhcplx}{\mathcal{H}^*(T_{2,n},\vec{x})} 
\newcommand{\tvkhcplx}{\mathcal{H}^k(T_{2,n},\vec{x})} 
\newcommand{\vsdiag}{[\![D]\!]}
\newcommand{\inv}{\text{inv}}
\definecolor{mygreen}{RGB}{34,139,34}
\title{A Categorification of the Vandermonde Determinant}
\title{A Categorification of the Vandermonde Determinant}
\author{Alex Chandler}
\begin{document}
\maketitle
\begin{abstract}
In the spirit of Bar Natan's construction of Khovanov homology, we give a categorification of the Vandermonde determinant. Given a sequence of positive integers $\vec{x}=(x_1,...,x_n)$, we construct a commutative diagram in the shape of the Bruhat order on $S_n$ whose nodes are colored smoothings of the $2$-strand torus link $T_{2,n}$, and whose arrows are colored cobordisms. An application of a TQFT to this diagram yields a chain complex whose Euler characteristic is the Vandermonde determinant evaluated at $\vec{x}$. A generalization to arbitrary link diagrams is given, producing categorifications of certain generalized Vandermonde determinants. We also address functoriality of this construction. 



\end{abstract}
\tableofcontents
\section{Introduction}

Categorification, as envisioned by Crane and Frenkel in \cite{crane1994four}, 
can be thought of as the process of interpreting a set theoretic or algebraic structure as a `shadow' of a category theoretic analogue. A categorification can endow familiar mathematical objects with richer structure, and can shed light on the structure by providing new tools to study it which were unavailable in the original setting. Categorification can be thought of as something like a mathematician's version of Plato's allegory of the cave. In Plato's allegory, a group of prisoners are confined in a cave facing a blank wall their entire lives. Outside the cave, there is a fire which casts shadows on the cave wall as objects pass in front of it. The prisoners have no concept of what the objects are and are aware only of the motions of the shadows on the wall. 
Plato's idea of a philosopher is one who escapes the cave and learns the true nature of these objects. Inspired by the ideas of Crane and Frenkel, we now attempt to ``escape the cave".

Linear algebra has proven to be an indispensable tool, having influence throughout mathematics and science. Therefore, one might expect that categorifying concepts in linear algebra would be of comparable importance. For instance, 
Elias and Hogancamp categorify the concepts of eigenvalues, eigenvectors, and diagonalization in \cite{elias2017categorical}. 
Just as diagonalization has various uses in representation theory, categorical diagonalization has proven useful in categorical representation theory. In \cite{elias2017categorical}, Elias and Hogancamp diagonalize the full twist Rouquier complex, and as an application are able to categorify the Young idempotents. The trace of the Coxeter matrix has been categorified by Happel in \cite{happel1997trace} using Hochschild homology. Happel's approach provides a topological interpretation of the condition that certain algebras have Coxeter matrix with trace $-1,$ e.g., the trace of the Coxeter matrix of the path algebra of a finite quiver without oriented cycles is -1 if and only if the underlying graph is a tree. Stolz and Teichner in \cite{stolz2012traces} provide a more general definition of a trace for monoidal categories, yielding applications to the partition function in super symmetric field theories.

This paper is motivated by M. Khovanov's encouragement to categorify special types of determinants, as a step towards categorifying other concepts in linear algebra.
We consider the {\it Vandermonde determinant}, usually defined as $\wtilde{V}(\vec{x})=\det (x_i^{j-1})_{i,j=1}^n$ where $\vec{x}=(x_1,\dots,x_n)$ is a list of variables. For purposes of categorification, we find it more convenient to consider the following rescaling:
\begin{equation}
V(\vec{x})=\begin{vmatrix}
  x_1 & x_1^2 & \cdots& x_1^{n} \\
  x_2 & x_2^2 & \cdots& x_2^{n} \\
\vdots & \vdots & \ddots &  \vdots \\
x_n & x_n^2 & \cdots &  x_n^{n} 
\end{vmatrix}
=\det (x_i^j)_{i,j=1}^n
\end{equation}
and require that $\vec{x}=(x_1,\dots,x_n)\in\Z_+^n$, where $\Z_+$ denotes the positive integers. Linearity of the determinant yields the relation $V(\vec{x})=x_1\dots x_n\wtilde{V}(\vec{x})$. The Vandermonde determinant and its properties are useful in several areas of mathematics. The nonvanishing of $\wtilde{V}(\vec{x})$ for distinct values of its inputs shows that the polynomial interpolation problem is uniquely solvable. It is a standard result that $\wtilde{V}(\vec{x})=\prod_{i<j}(x_j-x_i)$, and thus any alternating polynomial in variables $x_1,\dots x_n$ is divisible by $\wtilde{V}(\vec{x})$. For any partition $\lambda=(\lambda_1,\dots,\lambda_n)$ one can define the generalized Vandermonde determinant $\wtilde{V}_{\lambda}(\vec{x})=\det (x_i^{j+\lambda_{n-j+1}})_{i,j=1}^n$. The quotient $\wtilde{V}_\lambda(\vec{x})/\wtilde{V}(\vec{x})$ arises in the Frobenius character formula which can be used to compute characters of representations of the symmetric group. The Vandermonde determinant also appears in the theory of BCH code, Reed-Solomon error correction codes \cite{klove1999algebraic}, and can be used to define the discrete Fourier transform \cite{massey1998discrete}.

The following expressions for the Vandermonde determinant will prove to be useful for the purposes of categorfication: 
\begin{align}
V(\vec{x})&=\sum_{\pi\in S_n}(-1)^{\text{sgn}(\pi)} x_1^{\pi(1)}x_2^{\pi(2)}... \ x_n^{\pi(n)} \label{eq1} \\
&= \sum_{k\geq 0}(-1)^{k} \ \bigg[\sum_{\substack{\pi\in S_n \\ \text{inv}(\pi)=k}} x_1^{\pi(1)}x_2^{\pi(2)}... \ x_n^{\pi(n)}\bigg]\label{eq2}
\end{align}
where $S_n$ is the symmetric group on $n$ letters. With the above expression in mind, and comparing with Example \ref{chaincomplexex}, it makes sense to view the Vandermonde determinant as the Euler characteristic of some (co)homology theory. The goal of this paper is to categorify the Vandermonde determinant. To this end, we use some ideas from combinatorics and topology to construct a cohomology theory whose Euler characteristic is equal to the Vandermonde determinant. We accomplish this in a fashion similar to Khovanov's categorification of the Jones polynomial. The method used here is most strongly inspired by Bar-Natan's description \cite{bar2002khovanov,bar2005khovanov} of Khovanov's categorification. 

Specifically, given a link diagram $D$ with an ordering $c_1,\dots,c_n$ of its crossings, and a list $\vec{x}=(x_1,\dots,x_n)$ of natural numbers,  we construct a cochain complex $\vcplx$. Let $\vhcplx$ denote the cohomology of $\vcplx$. The main result of this paper, Theorem \ref{KirkMain}, concerns the behavior of $\vhcplx$ for a specific choice of diagram $D$. Let $B_2$ denote the braid group on 2 strands generated by the positive crossing $\sigma_1=\usebox\inlinecrossing \ \in B_2$. Let $T_{2,n}$ denote the diagram of the $(2,n)$-torus link obtained as the braid closure of $\sigma_1^n\in B_2$ (see Figure \ref{fig:torusa}).

\begin{theorem*} The Euler characteristic of $\mathcal{H}^*(T_{2,n},\vec{x})$ is equal to  the Vandermonde determinant $V(\vec{x})$:
$$\sum_{k\geq 0}(-1)^k\dim\tvkhcplx=V(\vec{x}).$$
Furthermore, the complex $\tvcplx$ does not depend on the ordering of the crossings of $T_{2,n}$. 
\end{theorem*}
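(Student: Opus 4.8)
The plan is to prove the two assertions separately: the Euler-characteristic statement needs only the graded dimensions of the chain groups, whereas the independence of the crossing order is a statement about the whole diagram of colored smoothings and cobordisms.

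\textbf{Euler characteristic.} Since $\sum_k(-1)^k\dim\tvkhcplx=\sum_k(-1)^k\dim\tvkcplx$ (the usual rank-nullity argument for a finite-length complex of finite-dimensional spaces), it suffices to compute the graded dimension of $\tvcplx$. By construction the diagram for $T_{2,n}$ has the shape of the Bruhat order on $S_n$, the nodes in homological degree $k$ being indexed by $\{\pi\in S_n:\inv(\pi)=k\}$, and $\tvkcplx$ is the direct sum of the $F$-images of those nodes. The heart of Part~1 is the identification of the colored smoothing $S_\pi$ attached to the node $\pi$: it is, up to diffeomorphism, the disjoint union over $i=1,\dots,n$ of $\pi(i)$ circles colored $x_i$. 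This rests on the elementary observation that for the diagram $T_{2,n}$ the smoothing obtained by giving the $B$-smoothing to any $m\geq 1$ of the $n$ crossings and the $A$-smoothing to the rest has exactly $m$ components. Applying the TQFT, which sends a circle colored $x_i$ to the Frobenius algebra $A_{x_i}$ (of rank $x_i$), gives $F(S_\pi)\cong\bigotimes_{i=1}^n A_{x_i}^{\otimes\pi(i)}$, of dimension $\prod_{i=1}^n x_i^{\pi(i)}$. Summing with signs and comparing with the expansion \eqref{eq2} yields $\sum_k(-1)^k\dim\tvkcplx=\sum_{\pi\in S_n}(-1)^{\inv(\pi)}\prod_i x_i^{\pi(i)}=V(\vec x)$.

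\textbf{Independence of the order.} Any two orderings of the $n$ crossings of $T_{2,n}$ are related by a sequence of transpositions of the labels of two crossings occupying consecutive positions in the braid $\sigma_1^n$ (bubble sort), so it is enough to show that such a transposition induces an isomorphism of complexes. The key local identity is that in a two-strand braid the two elementary smoothings of a crossing slide past one another: the tangle obtained by stacking an $A$-smoothing and a $B$-smoothing in one order equals the one obtained by stacking them in the other order (both are a cup-cap). Hence transposing the labels of two consecutive crossings changes the colored smoothing at each Bruhat node only by a product of such local slides, one for each color whose set of $B$-smoothed crossings is affected; in particular $F$ of the node is unchanged up to canonical isomorphism. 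Since these slides are planar isotopies, they are compatible with the saddle cobordisms decorating the edges of the diagram, and one concludes that $F$ carries the complex built from one ordering isomorphically onto the complex built from the other.

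The step I expect to be the main obstacle is, within Part~2, checking that the saddle cobordisms (and the signs in the differential) really are matched by the $A$-$B$ sliding isotopies, i.e.\ upgrading the node-wise isomorphisms to an isomorphism of chain complexes. This is precisely where it matters that the diagram is $T_{2,n}$ rather than a general $D$: for an arbitrary diagram the choice of which crossings carry which smoothing changes both the number of resulting circles and the cobordism maps, and the complex genuinely depends on the order.
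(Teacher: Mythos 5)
Your Part~1 is correct and is the same argument as the paper's: Lemma~\ref{height} identifies $D^\pi_{x_i}$ as $\pi(i)$ circles of color $x_i$, hence $\dim F^{\vec{x}}(D^\pi)=\prod_i x_i^{\pi(i)}$, and the rank-alternating sum over the Bruhat order reproduces equation~\eqref{eq2}.

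Part~2 reaches the right conclusion but is both overcomplicated and built on a misreading of the construction. You describe the edge maps as ``saddle cobordisms'' whose compatibility with your sliding isotopies must be checked, and you worry about ``the signs in the differential.'' Neither concern applies here. First, by Convention~\ref{cobordismconvention} (and the explicit Warning there), the edge morphisms $C^{\pi,\sigma}_{x_i}$ are \emph{not} local saddles in the Bar-Natan sense: they are, by definition, either the identity (if $\pi(i)=\sigma(i)$) or the unique connected genus-zero cobordism from $D^\pi_{x_i}$ to $D^\sigma_{x_i}$. A connected genus-zero cobordism is determined up to diffeomorphism by its boundary data, so once Lemma~\ref{height} tells you $D^\pi_{x_i}$ is always $\pi(i)$ circles regardless of how the crossings are labelled, the morphism $C^{\pi,\sigma}_{x_i}$ is the same object of $\Cob_2^{\vec{x}}$ under every ordering --- there is no compatibility to verify. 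Second, the complex is taken with $\mathbb{Z}_2$ coefficients (see Definition~\ref{SCTQFT} and Remark~\ref{zcoeff}), so there are no signs. The ``main obstacle'' you flag at the end therefore does not exist in this construction; it would exist in a Khovanov-style cube with local saddles and integral signs, but that is not what the paper builds.

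Consequently the bubble-sort reduction is unnecessary. The paper's proof of the ordering-independence is one line: Lemma~\ref{height} implies that every $D^\pi_{x_i}$ consists of $\pi(i)$ circles for any ordering, so both the vertices (after applying $F^{\vec{x}}$, each is $\bigotimes_i A_{x_i}^{\otimes \pi(i)}$) and the edge morphisms (identities or unique connected genus-zero cobordisms, fixed once the circle counts are fixed) of the diagram $[\![T_{2,n}]\!]$ are literally unchanged. Your adjacent-transposition argument, once you strip out the saddle/sign worries, collapses to exactly this observation applied one transposition at a time; keeping it only obscures that the independence is an immediate consequence of the circle count, not something requiring a chain-map verification.
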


The rest of the paper is organized as follows. Section \ref{categbackground} provides the basic tools and examples of categorification needed. Section \ref{sec2} is a review of symmetric group notations and the Bruhat order. Section \ref{sec3} gives relevant definitions and results about TQFTs and Frobenius algebras. Section \ref{sec4} introduces colored TQFTs: a generalization of standard TQFTs needed in this paper to accommodate the Vandermonde determinant's $n$-variables. In Section \ref{sec4} we construct, for each link diagram, a functor from the Bruhat order to a category of colored cobordisms. In Section \ref{Sectionthree} we construct the complex $\vcplx$, and show that its cohomology $\vhcplx$ categorifies certain generalized Vandermonde determinants. Choosing $D=T_{2,n}$ produces a complex whose cohomology $\tvhcplx$ categorifies the Vandermonde determinant $V(\vec{x})$. In Section \ref{functsection} we give a sense in which this categorification is functorial. In Section \ref{categarbit} we categorify determinants of positive integer valued matrices. Finally, Section \ref{lastsection} is dedicated to some remaining questions and possible applications related to the categorifications presented in this paper.

\textbf{Acknowledgements}: The author would like to thank  Mikhail Khovanov and Radmila Sazdanovi\'{c} for the encouragement and initial motivation for this project, Tye Lidman and Radmila Sazdanovic for thoughtful comments, corrections, and suggestions, and Michael Breen-McKay for several fruitful conversations.

\section{Background}

\subsection{Categorification}
\label{categbackground}

Categorification is the process of finding category theoretic analogues of set theoretic or algebraic structures.  Following Baez and Dolan in \cite{baez1998categorification} we give an analogy between set theory and category theory in Figure \ref{analogy}.

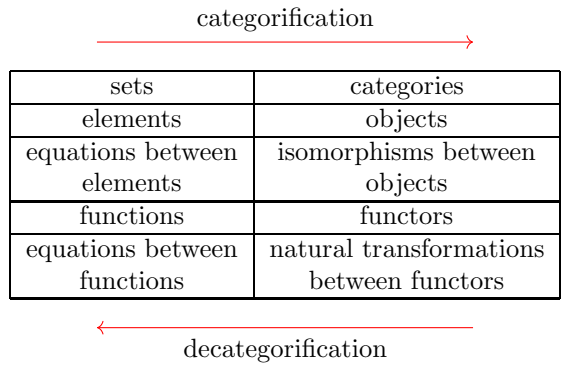
\begin{figure}[h]
\centering
\begin{tikzpicture}
\draw [->,red] (-2.5,1.9)--(2.5,1.9);
\node at (0,2.2){categorification};
\node at (0,0){\usebox\cattable};
\draw [<-,red] (-2.5,-1.9)--(2.5,-1.9);
\node at (0,-2.2){decategorification};
\end{tikzpicture}
\caption{An analogy between set theory and category theory.}
\label{analogy}
\end{figure}

To categorify a set $S$ one should find a category $\mathcal{C}$ and a function $\Decat:\{\textnormal{isom. classes of} \ \mathcal{C}\}\to S$. If $S$ has some extra structure (for example a group or ring structure), then $\mathcal{C}$ should have natural isomorphisms between objects (known as \textit{coherence conditions}) which descend under the function $\Decat$ to the appropriate structural equations between elements in $S$. In this context, we say that $\mathcal{C}$ \textit{categorifies} $S$, the object $x\in\Ob(\mathcal{C})$ \textit{categorifies} $\Decat([x])\in S$, and $\Decat([x])$ is the \textit{decategorification} of $x$. In general, there may be many ways to categorify a given object, but some ways are more useful/natural than others. The ``right" categorification should be one that not only lifts structures present at the decategorified level, but also introduces interesting new structures.
Next we provide some standard examples of categorification. Examples \ref{categnatural} and \ref{chaincomplexex} can be thought of as the basic building blocks for other interesting  categorifications such as Examples \ref{categeuler} and \ref{categkhovanov}. 

 \begin{ex}
The natural numbers $(\N,+,\cdot)$ form the structure of a \textit{rig}, that is, a ring without necessarily having additive inverses. The rig $(\N,+,\cdot)$ is categorified by the category $\kvect$ of finite dimensional vector spaces over $\Bbbk$ where $\Bbbk$ is any field. Decategorification is done by taking the dimension of the vector space $\Decat(V)=\dim(V)$. Direct sums and tensor products of vector spaces behave nicely in this regard: $\dim(V\oplus W)=\dim(V)+\dim(W)$ and $\dim(V\otimes W)=\dim(V)\text{dim}(W)$. $\kvect$ has all of the appropriate coherence conditions needed to categorify $\N$ as a rig, and for this reason is called a \textit{rig category}. Not only are the nice properties of the natural numbers lifted to the category of vector spaces, but we have a variety of new tools and structures available (that is, all of linear algebra).
\label{categnatural}
\end{ex}

\begin{ex} The ring $(\Z,+,\cdot)$ of integers is categorified by the category $\mathcal{C}(\kvect)$ of chain complexes of $\Bbbk$-vector spaces. Integers are categorified by chain complexes of vector spaces $V_*$ where decategorification is done by taking the Euler characteristic of a chain complex $\Decat(V_*)=\chi(V_*)=\sum_{n\in\mathbb{Z}}(-1)^n\text{dim}(V_i)$. Again, direct sums and tensor products behave nicely: $\chi(V_*\oplus W_*)=\chi(V_*)+\chi(W_*)$ and $\chi(V_*\otimes W_*)=\chi(V_*)\chi(W_*)$. $\mathcal{C}(\kvect)$ satisfies all of the appropriate coherence conditions needed to categorify $\Z$ as a ring. Furthermore, we have some very nice additional structure not available at the decategorified level (that is, all of homological algebra). In the same manner, we can think of integers as being categorified by cochain complexes $V^*$. Note that in this example and the previous one, $\kvect$ can be replaced with the category $\Rmod$ of finitely generated modules over a commutative ring $R$ (and $\dim$ should be replaced by $\rank$).
\label{chaincomplexex}
\end{ex}

In the previous two examples, we see categorifications of $\N$ as a rig and of $\Z$ as a ring. As of yet, there is no known categorification of $\Q$ or $\R$ as rings. As a step in this direction, Khovanov and Tian give a categorification of $\Z[\frac{1}{2}]$ in \cite{khovanov2017categorify}.
Next, we provide two examples of interesting and useful categorifications, both providing a rich variety of new tools and structures in their respective fields. Both of the following examples are considered to be the ``right'' categorifications. 

\begin{ex} Let $\mathbf{Sim}$ denote the category of  simplicial complexes and simplicial maps, and let $\Delta\in\Ob(\mathbf{Sim})$. Let  $f_n(\Delta)$ denote the number of $n$ dimensional faces in $\Delta$. The Euler characteristic $\chi(\Delta)=\sum_{n\geq 0}(-1)^nf_n(\Delta)$ is categorified by the simplicial homology $H_*(\Delta)$ of $\Delta$ in the sense that 
\begin{equation}
\sum_{n\geq 0}(-1)^n\dim H_n(\Delta)=\chi(\Delta).
\end{equation}
For this reason, given a chain complex $C_*$, the quantity $\sum_{n\in\Z}(-1)^n \dim C_n=\sum_{n\in\mathbb{Z}}(-1)^n \dim H_n$ is called the Euler characteristic of $C_*$. Simplicial homology contains the information of the Euler characteristic, but also has much more information about the space. For example, each homology group $H_n(\Delta)$ is a topological invariant, and the rank of the homology group $H_n(\Delta)$ indicates the number of `holes' of dimension $n$ in $\Delta$. 

The Euler characteristic of simplicial complexes is just a function $\chi:\Ob(\mathbf{Sim})\rightarrow\mathbb{Z}$ whereas homology is a functor $H_*:\mathbf{Sim}\rightarrow \grab$, where $\grab$ denotes the category of graded Abelian groups. Thus, not only do we get a stronger invariant, but for each simplicial map $\Delta\to\Delta^\prime$ we get an induced map $H_*(\Delta)\to H_*(\Delta^\prime)$. This functoriality is what gives simplicial homology its true power as compared to the Euler characteristic. For instance, functoriality of simplicial homology provides an easy proof of the Brouwer fixed-point theorem.
\label{categeuler}
\end{ex}
\begin{ex} A more recent example of categorification is given by the Khovanov homology. Khovanov homology is a bigraded Abelian group which categorifies the Jones polynomial of a link in the sense that one may recover the Jones polynomial by taking the (graded) Euler characteristic of the Khovanov homology. This theory was developed by Khovanov in \cite{khovanov1999categorification}, with more topological interpretations feature in the work of D. Bar-Natan in \cite{bar2005khovanov} and O. Viro in \cite{viro2004khovanov}. The Jones polynomial is a powerful link invariant, and the Khovanov homology is a strictly stronger link invariant. Moreover, Khovanov homology can be shown to be functorial, meaning that a cobordism between links induces maps between the Khovanov homologies of those links. In particular this functoriality provides numerical invariants of 2-knots by looking at the induced maps of cobordisms with no boundary. Functoriality in Khovanov homology has also proven useful in producing a lower bound for the slice genus of a knot. Rasmussen used this fact in \cite{rasmussen2010khovanov} to give a combinatorial proof of the Milnor conjecture, computing the slice genus of torus links.

The success of Khovanov's categorification of the Jones polynomial has inspired many other categorifications. One can find many other successful categorifications of link polynomials which are defined in a similar manner. For example, Ozsv\'{a}th and Szab\'{o}'s  categorification of the Alexander polynomial in \cite{ozsvath2004holomorphic} done independently by Rasmussen in \cite{rasmussen2003floer}. In fact, there is a whole family $\{P_n\}_{n\in\N}$ of polynomial link invariants (including the Jones polynomial $P_2$, and the Alexander polynomial $P_0$, as special cases) which are categorified by bigraded homology theories \cite{khovanov2004matrix}. Khovanov and Rozansky's categorification of the HOMFLY-PT polynomial in \cite{khovanov2008matrix} and \cite{khovanov2007triply} requires a triply-graded link homology theory. Similar in spirit, we also have Helme-Guizon and Rong's categorification of the chromatic polynomial in \cite{helme2005categorification}, Hepworth's categorification of the magnitude of a graph in \cite{hepworth2015categorifying}, and many more. The categorification given in this paper is of a similar nature.
\label{categkhovanov}
\end{ex}

\subsection{The Symmetric Group and its Bruhat Order}
\label{sec2}

For $\vec{x}\in\Z_+^n$, the Vandermonde determinant $V(\vec{x})$ can be expressed, via the equation (\ref{eq1}), as a sum over the symmetric group on $n$-letters, $S_n$. In this section, we recall some familiar notations, and a partial order on $S_n$ called the Bruhat order.

Let $[n]$ denote the set of numbers $\{1,2,...,n\}$ and let $S_n$ denote the group of bijections of $[n]$.  The \textit{one-line notation} for $\pi\in S_n$ is
$\pi=\pi_1\pi_2...\pi_n$ where $\pi_i=\pi(i)$.  Let $(a_1, a_2, ... , a_k)$ denote the \textit{cycle} sending $a_i$ to $a_{i+1}$ for $1\leq i\leq k-1$, sending $a_k$ to $a_1$, and fixing all elements of $[n]$ which do not appear in the list $a_1,...,a_k$. A \textit{transposition} is a cycle of the form $(a_1,a_2)$. Any $\pi\in S_n$ can be written as a product of transpositions $\pi=\tau_1\dots \tau_k$ and the quantity $\text{sgn}(\pi)=(-1)^k$ is well defined. An \textit{inversion} in $\pi$ is a pair $(i,j)\in[n]\times [n]$ with $i<j$ and $\pi(i)>\pi(j)$. Recall that $\text{sgn}(\pi)=(-1)^{\text{inv}(\pi)}$ where $\text{inv}(\pi)$ denotes the number of inversions in $\pi$.


%

The categorification of $V(\vec{x})$ presented in Section \ref{Sectionthree} uses the following partial order on $S_n$, which can be defined for any Coxeter group. Here we assume some basic knowledge of poset theory. If needed, the relevant definitions can be found in \cite{stanley1998enumerative}. Recall that a \textit{cover relation} in a poset $P$ is an order relation $x<y$ for which there exists no $z$ with $x<z<y$ and in this case we write $x\lessdot y$. If $P$ is a finite poset, then knowing the cover relations in $P$ is enough to know all order relations (in this case $x<y$ if and only if there is a chain of cover relations $x=x_0\lessdot x_1\lessdot\dots\lessdot x_k=y$).

\begin{defn}\label{bruhat} 
The (\textit{strong}) \textit{Bruhat order} on $S_n$ is the poset determined by the following cover relations: given $\pi, \sigma\in S_n$, define $\pi\lessdot \sigma$ if and only if $\sigma=\pi\tau$ where $\tau$ is a transposition and $\text{inv}(\sigma)=\text{inv}(\pi)+1$.
\end{defn}

The cover relations in the Bruhat order can be described conveniently by looking at $\pi$ and $\sigma$ in one-line notation and $\tau$ in cycle notation. We observe that multiplying on the right by the transposition $(i,j)$ swaps the $i^\text{th}$ and $j^\text{th}$ positions in one-line notation:
$\pi_1...\pi_i...\pi_j...\pi_n \cdot (i,j)=\pi_1...\pi_j...\pi_i...\pi_n.$
 Thus $\pi \lessdot \sigma$ in $S_n$ if and only if $\sigma$ can be produced by finding and interchanging two entries $\pi_i, \pi_j$ of $\pi$ in one-line notation with $i<j$ and $\pi_i<\pi_j$ such that none of the numbers $\pi_{i+1},...,\pi_{j-1}$ are in the $\Z$-interval $(\pi_i,\pi_j)$.
 
Recall, a poset $P$ is {\it ranked} if there is an order preserving function $\rho:P\to \Z$ (called the {\it rank function}) for which $x\lessdot y$ implies $\rho(y)=\rho(x)+1$. In a ranked poset, one can define the {\it length} of an interval $[x,y]$ to be $\rho(y)-\rho(x)$. For example, consider the poset $D=\{\hat{0},a,b,\hat{1}\}$ with cover relations $\hat{0}\lessdot a\lessdot \hat{1}$ and $\hat{0}\lessdot b\lessdot\hat{1}$ with $a$ and $b$ incomparable. $D$ has rank function $\rho(\hat{0})=0$, $\rho(a)=1=\rho(b)$, $\rho(\hat{1})=2$, and Hasse diagram $D=\usebox\diamondd$. Any poset isomorphic to $D$ is called a \textit{diamond}. If $P$ is a ranked poset such that every interval of length 2 is a diamond, then $P$ is said to be a \textit{thin poset}. The following well-known theorem (see for example Section 2.7 in \cite{bjorner2006combinatorics}) will be essential in our categorification of the Vandermonde determinant.
\begin{theorem} The Bruhat order on $S_n$ is a thin poset with rank function $\pi\mapsto\text{inv}(\pi)$.
\label{bruhatthin}
\end{theorem}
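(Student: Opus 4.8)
The statement has two parts, and the first is essentially built into the definition. By construction a cover $\pi\lessdot\sigma$ satisfies $\text{inv}(\sigma)=\text{inv}(\pi)+1$; since $\text{inv}$ therefore strictly increases along covers, the covers contain no directed cycle and so do determine a partial order, and every saturated chain from $\pi$ to $\sigma$ has length $\text{inv}(\sigma)-\text{inv}(\pi)$. Hence $\pi<\sigma$ implies $\text{inv}(\pi)<\text{inv}(\sigma)$, so $\pi\mapsto\text{inv}(\pi)$ is order preserving and sends covers to successive integers: it is the rank function.

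For thinness, the plan is to analyse an arbitrary length-$2$ interval directly in one-line notation. Fix $\pi<\gamma$ in $S_n$ with $\text{inv}(\gamma)=\text{inv}(\pi)+2$. A saturated chain $\pi\lessdot\sigma\lessdot\gamma$ exists, so writing $\sigma=\pi\tau_1$ and $\gamma=\sigma\tau_2$ with $\tau_1,\tau_2$ transpositions (and $\tau_1\neq\tau_2$, as $\gamma\neq\pi$), the element $v:=\pi^{-1}\gamma=\tau_1\tau_2$ is a product of two distinct transpositions; hence $v$ is either a $3$-cycle (when $\tau_1,\tau_2$ share a point) or a product of two disjoint transpositions. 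Next I would pin down the candidate intermediate elements: if $\pi\lessdot\rho\lessdot\gamma$ then $\rho=\pi\tau$ for a transposition $\tau$, and $\rho^{-1}\gamma=\tau v$ must again be a transposition; a short computation with cycle types shows this forces $\tau$ to be one of the three transpositions supported inside the $3$-cycle in the first case, or one of the two transpositions comprising $v$ in the second case. So there are at most three, respectively two, candidates $\pi\tau$.

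It then remains to decide which candidates genuinely lie strictly between $\pi$ and $\gamma$. Since $\gamma=(\pi\tau)(\tau v)$ with $\tau v$ a transposition and $\text{inv}(\gamma)=\text{inv}(\pi)+2$, the relation $\pi\tau\lessdot\gamma$ holds automatically once $\text{inv}(\pi\tau)=\text{inv}(\pi)+1$. Thus everything reduces to the claim: among the candidate transpositions, exactly two satisfy $\text{inv}(\pi\tau)=\text{inv}(\pi)+1$. I would verify this using the elementary count of how $\text{inv}$ changes under a transposition — the bookkeeping already underlying the one-line-notation description of the cover relations above — namely $\text{inv}(\pi(i\,j))-\text{inv}(\pi)=\pm(1+2N)$, where $N$ counts the positions between $i$ and $j$ whose entry lies strictly between $\pi_i$ and $\pi_j$. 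This is a finite case analysis on the relative order of the two or three entries of $\pi$ at the support positions of $v$ together with the entries lying between those positions (the $3$-cycle case is modelled on the rank-$2$ intervals of the Bruhat order of $S_3$). The outcome is that the open interval $(\pi,\gamma)$ has exactly two elements, and these are incomparable since both have $\text{inv}$ equal to $\text{inv}(\pi)+1$; hence $[\pi,\gamma]$ is isomorphic to the four-element diamond $D$, which is exactly what thinness requires.

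The step I expect to be the real work is this last case analysis: keeping careful track of how entries lying \emph{between} the transposed positions affect $\text{inv}$, and, in the disjoint-transpositions case, confirming that if one of the two transpositions yields an intermediate element then so does the other — a point where the hypothesis $\text{inv}(\gamma)=\text{inv}(\pi)+2$ is used essentially (otherwise only one intermediate element can occur). As an alternative, one can bypass the explicit computation and invoke the standard fact that the Bruhat order of any Coxeter group is thin, which follows from the subword/strong exchange property; see \cite{bjorner2006combinatorics}, \S2.7.
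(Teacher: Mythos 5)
The paper does not give a proof of this theorem; it is stated as well known, with the justification delegated entirely to the citation to \cite{bjorner2006combinatorics}, \S 2.7. Your proposal is therefore a genuinely different route: a direct combinatorial verification in one-line notation. The skeleton is correct. The rank-function part is as short as you say; the classification of candidate intermediate elements via the cycle type of $\tau_1\tau_2$ (a $3$-cycle or a product of two disjoint transpositions, giving three resp.\ two candidate transpositions $\tau$) is right; and reducing thinness to the claim that exactly two candidates satisfy $\text{inv}(\pi\tau)=\text{inv}(\pi)+1$, with $\pi\tau\lessdot\gamma$ then automatic from $\text{inv}(\gamma)=\text{inv}(\pi)+2$, is the correct reduction. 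Do note, however, that the ``finite case analysis'' you defer at the end \emph{is} the theorem --- it is the content of what Bj\"{o}rner--Brenti call the lifting property of Bruhat order --- and it is not quite a case check on the order type of a bounded set of entries. The formula $\text{inv}(\pi(i\,j))-\text{inv}(\pi)=\pm(1+2N)$ involves the entries at \emph{all} positions strictly between $i$ and $j$, and when the two transposed position-intervals overlap or nest, the set of in-between entries relevant to the second transposition is altered by applying the first; so one must argue that the constraints ``$+1$, then $+1$'' already force ``$+1$, then $+1$'' in the opposite order by a short inequality argument, not by a lookup among finitely many order types. (For instance, with $v=(a\,b)(c\,d)$ and $a<c<b<d$, the hypotheses $\pi_a<\pi_b$, $\pi_c<\pi_d$, $\pi_a\notin(\pi_c,\pi_d)$, and ``no position in $(a,b)$ carries a value in $(\pi_a,\pi_b)$'' together rule out $\pi_b\in(\pi_c,\pi_d)$, and that exclusion is exactly what is needed.) Your fallback --- invoking thinness of Bruhat order for general Coxeter groups via the subword and exchange properties --- is exactly what the paper does, and is the cleaner choice if one does not need the proof in hand.
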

\begin{rmk} The definition of Khovanov homology depends strongly on the fact that the Jones polynomial can be expressed as a rank alternating sum over a thin poset (the Boolean lattice). The categorification of the Vandermonde determinant described in Section \ref{Sectionthree} mimics the construction of Khovanov homology and is motivated by the fact that the Vandermonde determinant is also a rank alternating sum over a thin poset (the Bruhat order on $S_n$). 
\end{rmk}
\subsection{Cobordisms, TQFTs, and Frobenius Algebras}
\label{sec3}
Bar-Natan's description \cite{bar2005khovanov} of Khovanov homology 
is gotten by applying a TQFT to
a certain diagram in the category $\Cob_2$ of 2-dimensional cobordisms in the shape of the Boolean lattice. Our categorification of $V(\vec{x})$ is defined similarly but will instead be gotten by applying a special colored TQFT (see Definition \ref{SCTQFTdef}) to a diagram in the category $\Cob_2^{\vec{x}}$ of 2-dimensional colored cobordisms (see Definition \ref{colcobcat}), in the shape of the Bruhat order on $S_n$. In this section we review cobordisms, TQFTs, and the equivalence between 2-dimensional TQFTs and commutative Frobenius algebras.
\begin{defn} \label{cobcateg} Let $\Cob_n$ denote the the following category: objects are closed oriented ${(n-1)}$-dimensional manifolds, morphisms between objects $M$ and $N$ are (orientation preserving) diffeomorphism classes of oriented $n$-dimensional manifolds $W$ with $\partial W=-M\amalg N$, where $-M$ denotes $M$ with the opposite orientation and $\amalg$ denotes disjoint union. In this case we write $\partial_i W=M$ and $\partial_o W=N$ and say that $M$ is the \textit{in-boundary} and $N$ is the \textit{out-boundary} of $W$. Morphisms in $\Cob_n$ are called {\it oriented cobordisms}. Given $A\in \Hom(M,N)$ and $B\in\Hom(N,L)$ define the composition $B\circ A$ by gluing $B$ to $A$ along the identity map on $N$. See \cite{kock2004frobenius} for details on the well-definedness of this construction. 
\end{defn}
Given two $(n-1)$-manifolds, $M$ and $N$, one can form the disjoint union $M\amalg N$ which is again an $(n-1)$ manifold. Similarly, one can form disjoint unions of cobordisms. Disjoint unions endow the category $\Cob_n$ with the structure of a monoidal category. Given a diffeomorphism $\phi:M\rightarrow N$, one can form a cobordism from $M$ to $N$ by first forming the cylinder $[0,1]\times M$ and gluing $N$ to $\{1\}\times M$ via the diffeomorphism $\phi$. Notice that $M\amalg N$ is diffeomorphic to $N\amalg M$ via the map $\tau_{M,N}$ which interchanges factors. The cobordism corresponding to $\tau_{M,N}$ from $M\amalg N$ to $N\amalg M$ is called a {\it twist cobordism}. Twist cobordisms act as a symmetric braiding, and thus endow $\Cob_n$ with the structure of a symmetric monoidal category. In the case that $M$ and $N$ are both circles, the corresponding twist cobordism will be denoted as shown in Figure \ref{fig:7e}. 

In this paper, we are concerned only with $\Cob_2$ where cobordisms are easy to visualize and classify. The objects of $\Cob_2$ are closed oriented 1-manifolds (disjoint unions of oriented circles). By the classification of surfaces, morphisms in $\Cob_2$ are oriented surfaces with oriented boundary. Thus in dimension 2, connected cobordisms are classified by three quantities: genus, number of in-boundary components, and number of out-boundary components. In our pictures, we will draw the in-boundary on the bottom of the cobordism and the out-boundary on the top (so pictures go from the  bottom up). A standard result of Morse theory is that any cobordism in dimension 2 can be built by gluing and taking disjoint unions of the basic building blocks shown in Figure \ref{buildingblocks}.
\begin{figure}[h]
\begin{subfigure}{0.18\textwidth}
\centering
\usebox\generatorcap
\caption{} \label{fig:7a}
\end{subfigure}
\begin{subfigure}{0.18\textwidth}
\centering
\usebox\generatorcup
\caption{} \label{fig:7b}
\end{subfigure}
\begin{subfigure}{0.18\textwidth}
\centering
\usebox\generatorpants
\caption{} \label{fig:7c}
\end{subfigure}
\begin{subfigure}{0.18\textwidth}
\centering
\usebox\generatorcopants
\caption{} \label{fig:7d}
\end{subfigure}
\begin{subfigure}{0.18\textwidth}
\centering
\usebox\generatortwist
\caption{} \label{fig:7e}
\end{subfigure}
\caption{(a) The \textit{cap} cobordism from $\Circle$ to $\varnothing$. (b) The \textit{cup} cobordism from $\varnothing$ to $\Circle$. (c) The \textit{pants} cobordism from $\Circle\Circle$ to $\Circle$. (d) The \textit{copants} cobordism from $\Circle$ to $\Circle\Circle$. (e) The twist cobordism from $\Circle\Circle$ to $\Circle\Circle$.} \label{buildingblocks}
\end{figure}
In Section \ref{mainresult}, we will categorify the Vandermonde determinant by first constructing a diagram in a category of cobordisms, and then using this diagram to construct a cochain complex which contains $V(\vec{x})$ as its Euler characteristic. To do this, we need a way to pass from the category of cobordisms to a category of modules.
\begin{defn} An $n$-dimensional \textit{topological quantum field theory (TQFT)} is a symmetric monoidal functor from $\Cob_n$ to $\Rmod$ for some ring $R$. TQFTs over a ring $R$ form a category, denoted $\nTQFTR$, where morphisms are monoidal natural transformations. 
\label{TQFTdef}
\end{defn}
%
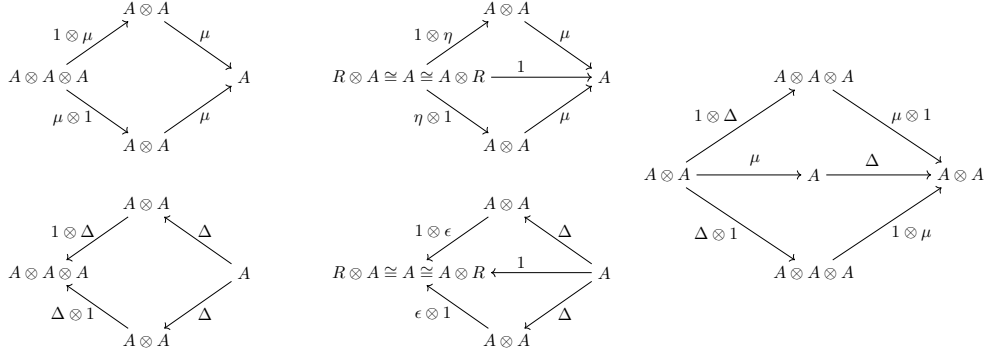
\begin{figure}[h]
\centering
\begin{tikzpicture}[scale=0.65, every node/.style={transform shape}]
\useasboundingbox (0,0.5) rectangle (19,7.5);
\node at (2,6){\usebox\algassoc};
\node at (9,6){\usebox\algunit};
\node at (2,2){\usebox\coalgassoc};
\node at (9,2){\usebox\coalgunit};
\node at (16,4){\usebox\frobeniusrel};
\end{tikzpicture}
\caption{Axioms of a Frobenius algebra. The diagram on the right illustrates the Frobenius relation. The other four diagrams illustrate the associativity, unit, coassociativity, and counit relations.}
\label{AxiomsFrobenius}
\end{figure}
\begin{defn} A \textit{Frobenius algebra} is a tuple $(A,\mu,\eta,\Delta,\varepsilon)$ such that $(A, \mu, \eta) $ is a unital associative algebra over a ring $R$ and $(A,\Delta, \varepsilon)$ is a counital coassociative coalgebra over $R$ for which the Frobenius relation holds: \[(\mu\otimes 1)\circ (1\otimes \Delta)=\Delta\circ\mu=(1\otimes\mu)\circ(\Delta\otimes 1).\]
Commutative Frobenius algebras over a ring $R$ form a category, denoted $\cFAR$, whose morphisms are homomorphisms of Frobenius algebras, that is, algebra homomorphisms which are also coalgebra homomorphisms.
\label{FrobAlgDef}
\end{defn}
\noindent These two definitions, \ref{TQFTdef} and \ref{FrobAlgDef}, look quite different at face value, however 2-dimensional TQFTs and (commutative) Frobenius algebras actually encode the same information. 
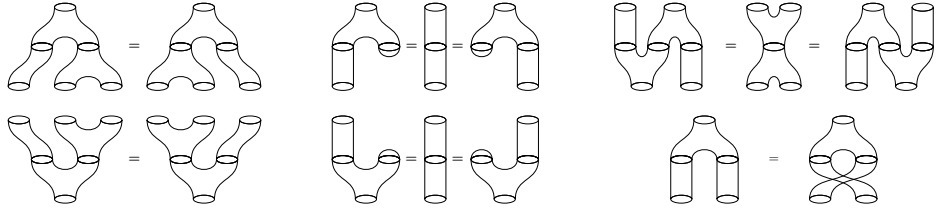
\begin{figure}[h]
\centering
\begin{tikzpicture}[scale=0.5, every node/.style={transform shape}]
\useasboundingbox (-2,1) rectangle (22,6);
\node at (1,5){\usebox\assoc};
\node at (9,5){\usebox\multunit};
\node at (1,2){\usebox\coassoc};
\node at (9,2){\usebox\comultcounit};
\node at (18,5){\usebox\frobrel};
\node at (18,2){\usebox\frocomm};
\end{tikzpicture}
\caption{Equalities of cobordisms which correspond to axioms of Frobenius algebras (compare to Figure \ref{AxiomsFrobenius}). The equality at the top right corresponds to the Frobenius relation, and the equality at the bottom right guarantees that the corresponding Frobenius algebra is commutative. The other four equalities correspond to the associativity, unit, coassociativity, and counit relations.}
\label{CobordismsFrobenius}
\end{figure}
Upon comparison of the relations in $\Cob_2$ shown in Figure \ref{CobordismsFrobenius} and the axioms of commutative Frobenius algebras in Figure \ref{AxiomsFrobenius}, we see that we can go back and forth between TQFTs and commutative Frobenius algebras via 
\begin{equation}
F\longleftrightarrow\bigg(
F\big(\Circle\big), 
F\big(\raisebox{-1mm}{\usebox{\pants}}\big),
F\big(\raisebox{-1mm}{\usebox{\cupcob}}\big),
F\big(\raisebox{-1mm}{\usebox{\copants}}\big),
F\big(\raisebox{0mm}{\usebox{\capcob}}\big)
\bigg).\label{frobbijection}
\end{equation} 
Thus we have a bijection between 2-dimensional TQFTs and commutative Frobenius algebras. Kock shows in \cite[Theorem 3.3.2]{kock2004frobenius} that the bijection above extends to an equivalence of categories.
\begin{theorem} There is a canonical equivalence of categories $\TQFTR \simeq \cFAR.$
\label{TQFTandFrobenius}
\end{theorem}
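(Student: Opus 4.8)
The plan is to prove the equivalence through a generators-and-relations presentation of $\Cob_2$, which turns both sides into the same ``category of algebraic structures internal to $\Rmod$''. The key input, which I would cite from \cite{kock2004frobenius} rather than reprove, is that $\Cob_2$ is the \emph{free} symmetric monoidal category on a commutative Frobenius object: it is generated, as a symmetric monoidal category, by the single object $\Circle$ together with the cap, cup, pants, and copants of Figure \ref{buildingblocks} (the twist cobordism being supplied by the symmetric structure), subject precisely to the relations of Figure \ref{CobordismsFrobenius}. The existence of the generators is the Morse-theoretic fact already quoted in Section \ref{sec3}; the completeness of the relations is the substantive half.

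Granting this, I would define a functor $\Phi\colon\TQFTR\to\cFAR$ by \eqref{frobbijection}: a $2$-dimensional TQFT $F$ is sent to the tuple $\bigl(F(\Circle),\,F(\text{pants}),\,F(\text{cup}),\,F(\text{copants}),\,F(\text{cap})\bigr)$. Since $F$ is symmetric monoidal, $F(\varnothing)=R$ and $F(\Circle^{\amalg n})\cong F(\Circle)^{\otimes n}$, so $F$ carries each cobordism identity of Figure \ref{CobordismsFrobenius} to the corresponding axiom of Figure \ref{AxiomsFrobenius}; hence $\Phi(F)$ is genuinely a commutative Frobenius algebra. On morphisms, $\Phi$ sends a monoidal natural transformation $\theta\colon F\Rightarrow G$ to its component $\theta_{\Circle}\colon F(\Circle)\to G(\Circle)$; naturality of $\theta$ against the four generating cobordisms says exactly that $\theta_{\Circle}$ commutes with $\mu,\eta,\Delta,\varepsilon$, i.e.\ is a homomorphism of Frobenius algebras, and functoriality of $\Phi$ is then immediate.

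Next I would build $\Psi\colon\cFAR\to\TQFTR$ from the presentation: a commutative Frobenius algebra $A$ determines a unique symmetric monoidal functor $\Psi(A)\colon\Cob_2\to\Rmod$ with $\Psi(A)(\Circle)=A$ and the four generators sent to the structure maps of $A$, well-defined precisely because $A$ satisfies the relations of Figure \ref{CobordismsFrobenius}; a Frobenius homomorphism $f\colon A\to B$ is sent to the monoidal natural transformation with components $f^{\otimes n}$, naturality at an arbitrary cobordism following by decomposing it into generators. Finally I would verify that $\Phi\circ\Psi=\mathrm{id}_{\cFAR}$ on the nose and that $\Psi\circ\Phi\cong\mathrm{id}_{\TQFTR}$ via the canonical monoidal isomorphisms $F(\Circle)^{\otimes n}\cong F(\Circle^{\amalg n})$; with a modest amount of care one in fact gets $\Phi$ and $\Psi$ mutually inverse, so the ``canonical equivalence'' of the statement holds in the strongest sense.

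The main obstacle is the presentation itself, specifically the completeness of the relations: showing that any two factorizations of a given $2$-cobordism into caps, cups, pants, and copants are connected by the Frobenius, (co)associativity, (co)unit, and commutativity relations. Producing \emph{some} such factorization is routine Morse theory, but the uniqueness up to relations rests on the classification of compact surfaces with boundary together with a careful bookkeeping of how handle decompositions can be modified, which is exactly the content of \cite[Ch.~3]{kock2004frobenius}. I would invoke that result and devote the write-up to the straightforward verifications sketched above.
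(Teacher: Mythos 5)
Your proposal is correct and takes essentially the same route as the paper, which in fact gives no proof at all but simply cites \cite[Theorem 3.3.2]{kock2004frobenius} for this statement. You similarly defer the substantive part — that $\Cob_2$ is the free symmetric monoidal category on a commutative Frobenius object, i.e.\ that the relations in Figure \ref{CobordismsFrobenius} are complete — to Kock, and your sketch of the two functors $\Phi$ and $\Psi$ and their mutual inverseness is the standard fleshing-out of what that citation buys.
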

The  Frobenius algebras/TQFTs in our construction need to satisfy the following additional property, in order for diamonds in the diagram $F^{\vec{x}}\vsdiag$ constructed in Section \ref{Sectionthree} to be commutative.
\begin{defn} \label{specialFA} A Frobenius algebra $(A,\mu,\eta,\Delta,\varepsilon)$ is called \textit{special} if $\mu\circ \Delta=1_A$. 
\end{defn}
 \begin{figure}[h]
 \centering
\begin{tikzpicture}
\useasboundingbox (3,-.6) rectangle (3,.6);
\node at (0,0){$F\left(\usebox\specialmudelta\right)=F\left(\ \usebox\specialcylinder\ \right)$};
\node at (3,0){$\iff$};
\node at (5,0){$\mu\circ\Delta=1_A$};
    \end{tikzpicture} 
    \caption{The defining relation of a special Frobenius algebra (Definition \ref{specialFA}), expressed in terms of the corresponding TQFT. One may interpret this as the fact that special TQFTs do not detect genus (see Lemma \ref{specialTQFT}).}
    \label{mudelta}
    \end{figure}
Figure \ref{mudelta} provides the topological interpretation of this condition. Special Frobenius algebras are also sometimes referred to as \textit{strongly separable} algebras. Aguiar uses this terminology in \cite{aguiar2000note} where he characterizes and provides many nice examples of these algebras.
\begin{ex}
Define $A_n$ to be the $n$-fold product
$$A_n=\mathbb{Z}_2\times \mathbb{Z}_2\times ... \times \mathbb{Z}_2.$$
Then $A_n$ is a $\mathbb{Z}_2$-algebra of dimension $n$ with addition and multiplication defined pointwise.  Consider the basis $\{\vec{e}_i \ | \ 1\leq i\leq n\}$ where $\vec{e}_i=(0,...,1,...,0)$ has all entries 0 except for a single 1 in position $i$. Define the counit $\varepsilon:A_n\rightarrow \mathbb{Z}_2$ by sending $\vec{a}\mapsto \sum_{i=1}^n a_i$
where $\vec{a}=(a_1,...,a_n)$. This determines the comultiplication $\Delta:A_n\rightarrow A_n\otimes A_n$ as given by the formula $\vec{a}\mapsto \sum_{i=1}^n a_i\vec{e}_i\otimes \vec{e}_i$.
These maps endow $A_n$ with the structure of a Frobenius algebra. Actually, since
$$(\mu\circ\Delta)(\vec{a})=\mu\bigg(\sum_{i=1}^n a_i\vec{e}_i\otimes \vec{e}_i\bigg)= \sum_{i=1}^n a_i\vec{e}_i=\vec{a},$$
\noindent $A_n$ is a special Frobenius algebra. Thus we have a family $\{A_n\}_{n\in\Z_+}$ of special Frobenius algebras over $\mathbb{Z}_2$, one for each positive integer $n$. 
\label{special2}
\end{ex}

We now restrict our attention to TQFTs which correspond to special Frobenius algebras via the bijection (\ref{TQFTandFrobenius}). We will make use of the following fact in Section \ref{Sectionthree} during our construction of the complex $\vcplx$.

\begin{lemma} If $F$ is a 2-dimensional TQFT which corresponds to a special Frobenius algebra via (\ref{TQFTandFrobenius}), then for any two connected cobordisms $B,C\in \Hom(M,N)$ with $M,N\in\Cob_2$, we have $F(B)=F(C)$. \label{specialTQFT}

\end{lemma}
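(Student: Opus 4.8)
The plan is to combine the classification of compact surfaces with the defining relation of a special Frobenius algebra, which says precisely that $F$ sends a handle to the identity. First I would reduce to a statement about genus. Since $B$ and $C$ are connected cobordisms with the same source $M$ and the same target $N$, they have the same number $p$ of in-boundary circles and the same number $q$ of out-boundary circles; by the classification of compact oriented surfaces with boundary, two connected cobordisms with the same boundary data are diffeomorphic (rel boundary, orientation-preserving), hence equal as morphisms of $\Cob_2$, as soon as they have the same genus. So we may assume $B$ has genus $g$ and $C$ has genus $g+r$ with $r\geq0$, and it is enough to prove that raising the genus of a connected cobordism by one does not change its image under $F$; the full claim then follows by induction on $r$.

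Second, I would realize the ``add a handle'' move inside $\Cob_2$. Write $H=\mu\circ\Delta\colon\Circle\to\Circle$ for the twice-punctured torus, i.e.\ the copants of Figure~\ref{fig:7d} followed by the pants of Figure~\ref{fig:7c}; it has genus one. If $p+q\geq1$, say $N=\Circle\amalg N'$ (the case in which $M$ has a boundary circle is symmetric, using precomposition instead), then a connected genus-$(g+1)$ cobordism $M\to N$ is diffeomorphic to $(H\amalg\mathrm{id}_{N'})\circ B$: gluing $H$ onto one out-boundary circle of $B$ keeps the result connected, leaves the boundary unchanged, and increases the genus by exactly one (from $\chi(X\cup_{\Circle}Y)=\chi(X)+\chi(Y)$ together with $\chi(\Sigma_{g,b})=2-2g-b$). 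If instead $p=q=0$, then $B=\Sigma_g$ and $C=\Sigma_{g+r}$ are closed connected surfaces, and $\Sigma_g$ is diffeomorphic to $\mathrm{cap}\circ H^{g}\circ\mathrm{cup}$, where $\mathrm{cup}\colon\varnothing\to\Circle$ and $\mathrm{cap}\colon\Circle\to\varnothing$ are the disk cobordisms of Figures~\ref{fig:7b} and~\ref{fig:7a}.

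Third, I would apply $F$ and invoke Definition~\ref{specialFA}: with $A=F(\Circle)$ we have $F(H)=\mu_A\circ\Delta_A=1_A$ (this is exactly Figure~\ref{mudelta}). In the case $p+q\geq1$, monoidality and functoriality of $F$ give
\[
F\big((H\amalg\mathrm{id}_{N'})\circ B\big)=\big(F(H)\otimes\mathrm{id}_{F(N')}\big)\circ F(B)=\big(1_A\otimes\mathrm{id}\big)\circ F(B)=F(B),
\]
which is the desired invariance under adding a handle, so $F(B)=F(C)$. In the closed case, $F(\Sigma_g)=F(\mathrm{cap})\circ F(H)^{g}\circ F(\mathrm{cup})=\varepsilon\circ 1_A^{\,g}\circ\eta=\varepsilon\circ\eta$, visibly independent of $g$, and again $F(B)=\varepsilon\circ\eta=F(C)$.

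The one genuine obstacle is the geometric bookkeeping of the second step: verifying that gluing the twice-punctured torus onto a single boundary circle really is the genus-raising operation and that it reaches the general connected cobordism of the next genus, and separately treating the closed case in which there is no boundary circle along which to glue. Once that is in hand, the conclusion is an immediate application of the special relation $\mu\circ\Delta=1_A$ and the symmetric monoidal functoriality of $F$.
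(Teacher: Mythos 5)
Your proof is correct, and it rests on the same key insight as the paper's: the special relation $\mu\circ\Delta=1_A$ makes $F$ blind to genus, because $F$ sends the once-handled cylinder $H=\mu\circ\Delta$ to the identity. Where you differ is in the decomposition used to exploit this. The paper writes any connected cobordism $C\in\Hom(M,N)$ in \emph{normal form} $C_1\circ C_2\circ C_3$, where $C_3$ is a genus-$0$ piece collecting all in-boundary circles to a single circle, $C_2$ is a $1$-in, $1$-out piece carrying all of the genus (hence a stack of copies of $H$), and $C_1$ is a genus-$0$ piece distributing a single circle to all out-boundary circles; then $F(C_2)=1$ and $F(C)=F(C_1)\circ F(C_3)$ is determined by the boundary data alone. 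You instead induct on genus, realizing the genus-raising move by gluing one copy of $H$ onto a single boundary circle, which forces you to treat the closed case $M=N=\varnothing$ separately since there is no boundary circle to glue onto. The paper's normal-form decomposition handles that case uniformly (there $C_3$ is a cup and $C_1$ a cap, which is exactly your $\mathrm{cap}\circ H^{g}\circ\mathrm{cup}$), at the modest cost of invoking normal form rather than just the classification of surfaces and an Euler characteristic computation. Both routes are sound, and the difference is one of bookkeeping rather than substance.
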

\begin{proof}
Given a commutative special Frobenius algebra $A$, let $F$ be the corresponding TQFT. In Figure \ref{mudelta} we see the defining relation $\mu\circ\Delta=1$ expressed topologically. It follows that applying $F$ to any connected cobordism with one incoming and one outgoing boundary also results in the identity map, because any such cobordism can be expressed by stacking the cobordism \raisebox{-1mm}{\usebox\specialcob} on itself an appropriate number of times (just enough to get the correct genus). 
    
Now, take objects $M,N\in\Cob_2$, connected cobordisms $C,D\in \Hom(M,N)$ and suppose $M$ (respectively $N$) consists of $k$ (respectively $\ell$) disjoint circles. Then $C$ and $D$ both have $k$ incoming boundary components and $l$ outgoing boundary components. Since $C$ and $D$ are connected cobordisms in $\Cob_2$, they can be written in \textit{normal form}. That is, write $C=C_1\circ C_2\circ C_3$ where $C_3$ has $k$ incoming boundary components, one outgoing boundary component and genus 0, $C_2$ has one incoming boundary component, the same genus as $C$ and one outgoing boundary component, and $C_1$ has one incoming boundary component, $l$ outgoing boundary component, and genus $0$. Similarly, we can write $D=D_1\circ D_2\circ D_3$ where $D_1=C_1$ and $D_3=C_3$ and $D_2$ has one incoming and one outgoing boundary component and has the same genus as $D$. It follows from the previous paragraph that $F(C_2)=F(D_2)=1$ so 
$$F(C)=F(C_1)\circ F(C_3)=F(D_1)\circ F(D_3)=F(D).
$$
\end{proof}

\section{Categorifying Generalized Vandermonde Determinants}
\label{mainresult}

\subsection{Colored Cobordisms and TQFTs} \label{coloredcobcat}
\label{sec4}

In this section, we introduce colors into the cobordism category, with different colors corresponding to the different variables present in $V(\vec{x})$.

\begin{defn}
Let $S$ be a set, endowed with the discrete topology. Define $\Cob_k^S$ to be the category whose objects are pairs $(M,\phi_M)$ where $M$ is a $(k-1)$-dimensional closed oriented manifold and $\phi_M$ is a continuous map from $M$ to $S$. Morphisms between $(M,\phi_M)$ and $(N, \phi_N)$ are pairs $([A], \phi_A)$ where $[A]$ is an oriented cobordism class from $M$ to $N$ ($[\cdot]$ denotes the diffeomorphism class of $A$) and $\phi_A$ is a continuous map from $A$ to $S$ such that $\phi_A|_{\partial_iA}=\phi_M$ and $\phi_A|_{\partial_oA}=\phi_N$, where $\partial_i$ and $\partial_o$ denote the in-boundary and out-boundary respectively.  
\label{colcobcat}
\end{defn}

Intuitively, the objects of $\Cob_k^S$ are closed  $(k-1)$-manifolds whose connected components are colored by elements of $S$ and morphisms are (diffeomorphism classes of) $k$-manifolds with boundary and with each connected component given a color from $S$. 
\begin{conv} Let $\vec{x}=(x_1,\dots,x_n)$. We introduce the slight abuse of notation $\Cob_k^{\vec{x}}=\Cob_k^{\{x_1,\dots,x_n\}}.$
\end{conv}
For convenience, instead of labeling connected components of manifolds with elements of the set, we display them in different colors. Our running example (starting with Example \ref{exnonex}) is in the case $S=\{x_1,x_2,x_3\}\subseteq \Z_+$ with the convention $x_1={\color{red} \text{red}}, \ x_2={\color{mygreen} \text{green}}, \ x_3={\color{blue} \text{blue}}$. 

\begin{ex}
Let $\vec{x}=(x_1,x_2,x_3)$ have distinct entries and consider the category $\Cob_2^{\vec{x}}$. Let $M= {\color{red} \mathbf{\Circle}}
 {\color{red} \Circle}
 {\color{red} \Circle}
 {\color{mygreen} \Circle}
 {\color{mygreen} \Circle}
 {\color{blue} \Circle}$
 and 
 $N= 
 {\color{red} \Circle}
 {\color{mygreen} \Circle}
 {\color{mygreen} \Circle}
 {\color{blue} \Circle}
 {\color{blue} \Circle}
 {\color{blue} \Circle}$. Figure \ref{fig:3a} shows a colored cobordism from $M$ to $N$ and in Figure \ref{fig:3b} we see an example of a cobordism for which no coloring gives a morphism in $\Cob^{\vec{x}}_2$. 
 \begin{figure}[h]
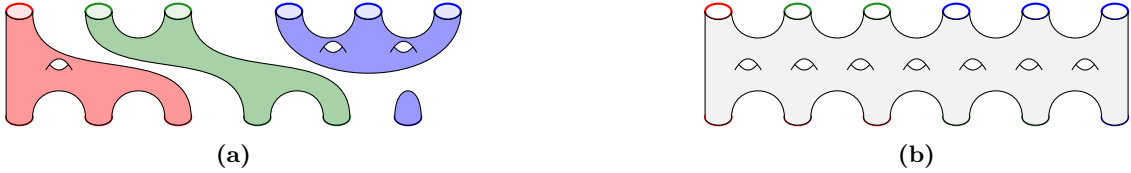

\begin{subfigure}{0.45\textwidth}
\centering
\usebox\coloredcob
\caption{} \label{fig:3a}
\end{subfigure}
\hspace*{\fill} 
\begin{subfigure}{0.45\textwidth}
\centering
\usebox\notcoloredcob
\caption{} \label{fig:3b}
\end{subfigure}
\caption{A colored cobordism in (a) and a non-example in (b).} \label{fig:3}
\end{figure}
\label{exnonex}
\end{ex}
$\Cob_k^S$ is a monoidal category under the operation of taking disjoint unions. Let $\Cob_k^S(x)$ be the subcategory of $\Cob_k^S$ consisting of all objects and morphisms labeled $x$, where $x\in S$. Then 
\[\Cob_k^S\cong \prod_{x\in S}\Cob_k^S(x)\]
where each $\Cob_k^S(x)$ is isomorphic to $\Cob_k$. 
\begin{defn} A \textit{colored topological quantum field theory} is a monoidal functor $F:\Cob_k^S\rightarrow \Rmod$ for some finite set $S$ and a commutative ring $R$ such that the restriction $F_x$ of $F$ to the subcategory ${\Cob_k^S(x)}$ is a TQFT for each $x\in S$. In the case that $k=2$, call a colored TQFT \textit{special} if for each $x\in S$, $F_x$ corresponds to a special Frobenius algebra. For brevity we will use the abbreviation \textit{SC-TQFT} to refer to a \textit{special colored TQFT}.
\label{SCTQFTdef}
\end{defn}
The categorification presented in Section \ref{Sectionthree} takes a sequence $\vec{x}\in\Z_+^n$, and a link diagram $D$ with $n$ crossings, and produces a cohomology theory $\mathcal{H}^*(D,\vec{x})$ whose Euler characteristic is an evaluation of a generalized Vandermonde determinant. This is accomplished in two steps. First we pass from the input data $(D,\vec{x})$ to a diagram $\vsdiag$ in $\Cob_2^{\vec{x}}$ in the shape of the Bruhat order on $S_n$. Second we apply an SC-TQFT to pass to a diagram in the category of vector spaces, from which we obtain $\mathcal{H}^*(D,\vec{x})$. In the remainder of this section we describe the first step: given $\vec{x}\in\Z_+^n$ and a link diagram $D$ with $n$ ordered crossings, construct a functor from the poset category $S_n$ (with the Bruhat order) to $\Cob_2^{\vec{x}}$. That is, for each $\pi\in S_n$ we construct an object $D^\pi_{\vec{x}}\in\Ob(\Cob_2^{\vec{x}})$, and for each cover relation $\pi\lessdot\sigma$ we construct a cobordism $C^{\pi,\sigma}_{\vec{x}}\in\Hom(D^{\pi}_{\vec{x}},D^{\sigma}_{\vec{x}}).$
Recall that crossings in knot diagrams have two types of smoothings, the \textit{0-smoothing} and the \textit{1-smoothing} (see Figure \ref{zeroandone}). 

\begin{figure}[h]
\centering
\begin{tikzpicture}[scale=.6]
\node (1) at (0,0){\usebox\crossingdiag};
\node (2) at (4,0){\usebox\zerosmoothdiag};
\node (3) at (-4,0){\usebox\onesmoothdiag};
\draw[->,decorate,decoration={snake,amplitude=.4mm,segment length=3mm}] (1,0)--(3,0);
\draw[->,decorate,decoration={snake,amplitude=.4mm,segment length=3mm}] (-1,0)--(-3,0);
\node at (2,.4) {1};
\node at (-2,.4) {0};
\end{tikzpicture}
\caption{The 0 and 1 smoothings of a crossing in a knot diagram.}
\label{zeroandone}
\end{figure}
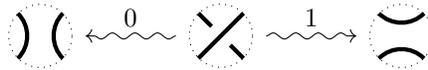

\begin{defn} Fix $\vec{x}=(x_1,\dots,x_n)\in \Z_+^n$ and let $D$ be a link diagram with $n$ crossings with a choice of a total ordering $c_1,...,c_n$. Given $\pi\in S_n$, the \textit{$\pi$-smoothing} of $D$ (relative to $\vec{x}$) is
$$D^\pi_{\vec{x}}=D^\pi_{x_1} \amalg D^\pi_{x_2} \amalg ... \amalg D^\pi_{x_n} \in \Cob_2^{\vec{x}}$$
where $D^\pi_{x_i}\in \Cob_2^{\vec{x}}(x_i)$ is gotten by giving crossings $c_1,c_2,\dots,c_{\pi(i)}$ the 1-smoothing, giving all other crossings the 0-smoothing, and giving all connected components of $D^\pi_{x_i}$ the label $x_i$. Let $s_i^\pi(D)$ denote the number of disjoint circles in $D_{x_i}^\pi$. For simplicity we often write $D^\pi=D^\pi_{\vec{x}}$ when no confusion will arise from doing so.
\end{defn}
See Figure \ref{smoothing} for an example in the case $D=T_{2,3}$.
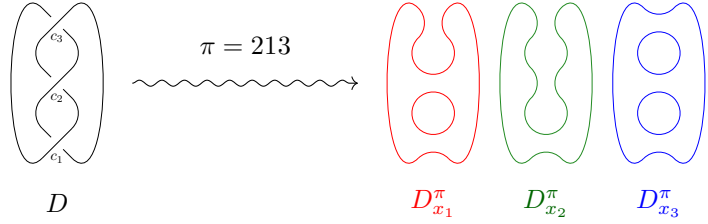
\begin{figure}[h]
\centering
\begin{tikzpicture}
\useasboundingbox (-.7,-.8) rectangle (8.7,2.2);
\node[scale=.8] at (0,1) {\usebox\postreflabeled};
\node at (0,-.6) {$D$};
\draw[->,decorate,decoration={snake,amplitude=.4mm,segment length=3mm}] (1,1)--(4,1);
\node at (2.5,1.5){$\pi=213$};
\node[scale=.4] at (5,1) {\usebox\trefBBAred};
\node at (5,-.6) {${\color{red} D^{\pi}_{x_1}}$};
\node[scale=.4] at (6.5,1) {\usebox\trefBAAgreen};
\node at (6.5,-.6) {${\color{green!40!black} D^{\pi}_{x_2}}$};
\node[scale=.4] at (8,1) {\usebox\trefBBBblue};
\node at (8,-.6) {${\color{blue} D^{\pi}_{x_3}}$};
\end{tikzpicture}
\caption{The $213$-smoothing of $T_{2,3}.$}
\label{smoothing}
\end{figure}
\begin{defn} Fix $\vec{x}=(x_1,\dots,x_n)\in \Z_+^n$ and let $D$ be a link diagram with $n$ crossings with a choice of a total ordering $c_1,...,c_n$. Suppose $\sigma\in S_n$ covers $\pi$ in the Bruhat order. Define the colored cobordism $C^{\pi,\sigma}_{\vec{x}}$ from $D^\pi_{\vec{x}}$ to $D^\sigma_{\vec{x}}$ in the category $\Cob_2^{\vec{x}}$ by 
$$C^{\pi,\sigma}_{\vec{x}}=C^{\pi,\sigma}_{x_1} \amalg ... \amalg C^{\pi,\sigma}_{x_n}$$
where $C^{\pi,\sigma}_{x_i}$ is the identity cobordism from $D^\pi_{x_i}$ to $D^\sigma_{x_i}$ if $\pi(i)=\sigma(i)$, or the unique connected genus zero cobordism from $D^\pi_{x_i}$ to $D^\sigma_{x_i}$ if $\pi(i)\neq \sigma(i)$. The cobordism $C^{\pi,\sigma}_{x_i}$ is given the color $x_i$. Again, for simplicity we often write $C^{\pi,\sigma}_{\vec{x}}=C^{\pi,\sigma}$ when no confusion will arise from doing so.
\end{defn}
\begin{conv}To avoid having to always draw 2-dimensional pictures of cobordisms, we can use the following shortcut. To denote the connected genus zero cobordism from a smoothing $D_{x_i}^\pi$ to another smoothing $D_{x_i}^\sigma$, simply circle which crossings change in the picture of $D_{x_i}^\pi$. See Figure \ref{shortcut} for an example. Warning: since we require the cobordisms of each color to be connected, the circles around crossings do not indicate local saddle cobordisms (like one might expect in comparison to Bar-Natan's notation in \cite{bar2002khovanov}).
\label{cobordismconvention}\end{conv}
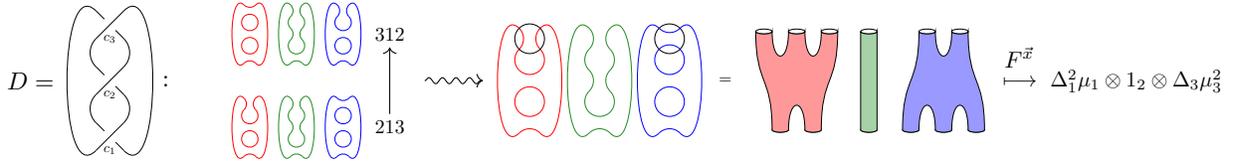
\begin{figure}[h]
\centering
\begin{tikzpicture}[scale=.62, every node/.style={transform shape}]
\node[scale=1.5] at (-6.2,0){$D=$};
\node[scale=1.2] at (-4.5,0){\usebox\postreflabeled};
\node[scale=1.5] at (-3.3,0){:};
\node[scale=.25] at (-1.5,-1){\usebox\trefBBAred};
\node[scale=.25] at (-.5,-1) {\usebox\trefBAAgreen};
\node[scale=.25] at (.5,-1) {\usebox\trefBBBblue};
\node[scale=.25] at (-1.5,1){\usebox\trefBBBred};
\node[scale=.25] at (-.5,1) {\usebox\trefBAAgreen};
\node[scale=.25] at (.5,1) {\usebox\trefBBAblue};
 \node[scale=1.2] (a) at (1.5,-1) {213};
        \node[scale=1.2] (b) at (1.5,1) {312};
        \draw[->] (a)--(b);
        \draw[->,decorate,decoration={snake,amplitude=.4mm,segment length=1.8mm}] (2.25,0)--(3.5,0);
\node[scale=.45] at (4.5,0) {\usebox\trefBBAredCircThree};
\node[scale=.45] at (6,0) {\usebox\trefBAAgreen};
\node[scale=.45] at (7.5,0) {\usebox\trefBBBblueCircThree};
\node at (8.7,0){$=$};
\node at (11.8,0){\usebox\coloredcobo};
\node[scale=1.3] at (17,0){$\longmapsto \ \Delta_1^2\mu_1\otimes 1_2\otimes \Delta_3\mu_3^2$ };
\node[scale=1.4] at (15,.5){$F^{\vec{x}}$};
\end{tikzpicture}
\caption{A shortcut notation for colored cobordisms, as explained in \ref{cobordismconvention}. The above picture denotes the cobordism $C^{213,312}$ from $D^{213}$ to $D^{312}$ where $D=T_{2,3}$.}
\label{shortcut}
\end{figure}
\subsection{Construction of the Chain Complex $\mathcal{C}^*(D,\vec{x})$}
\label{Sectionthree}

\begin{defn}
Let $D$ be a link diagram with $n$ crossings with a fixed total ordering of the crossings $c_1,c_2,\dots,c_n$, and let $\vec{x}\in \Z_+^n$. Construct a diagram $[\![D]\!]$ in $\Cob_2^{\vec{x}}$ according to the following steps:
\begin{enumerate}[label=\arabic*.]
\item Start with the Hasse diagram of the (strong) Bruhat order on $S_n$. 
\item Replace each vertex $\pi\in S_n$ by the $\pi$-smoothing $D^\pi$. 
\item For each cover relation $\pi\lessdot\sigma$, replace the edge from $\pi$ to $\sigma$ by the cobordism $C^{\pi,\sigma}$. 
\end{enumerate}
\end{defn}
See Figure \ref{VandHom} for an example in the case that $D=T_{2,3}$. 
\begin{figure}[h!]
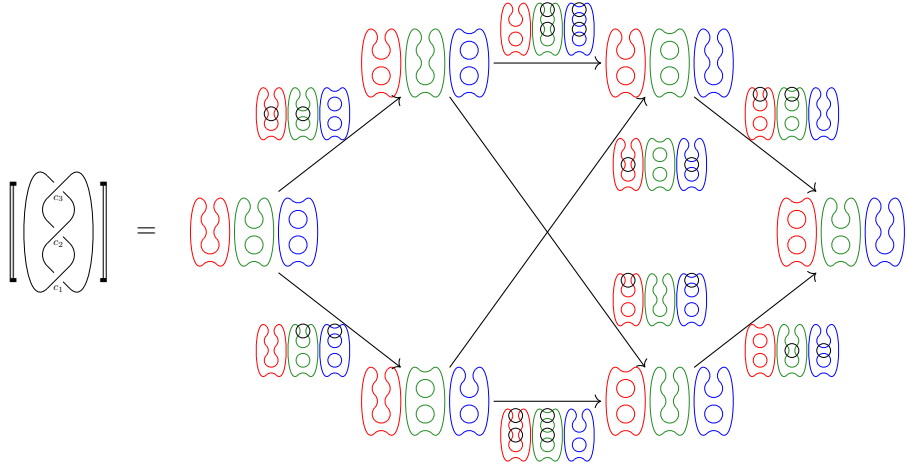

\centering
\usebox\VandSmoothCplxd
\caption{Shown above is the diagram $[\![D]\!]$ in $\Cob_2^{\vec{x}}$ in the case that $D$ is a diagram of the torus knot $T_{2,3}$ with crossings ordered from bottom to top, and $\vec{x}=(x_1,x_2,x_3)$ has distinct entries.}
\label{VandHom}
\end{figure}
\begin{defn} Let $D$ be a link diagram with $n$ crossings and with a fixed total ordering of the crossings $c_1,c_2,\dots,c_n$. Let $\vec{x}=(x_1,...,x_n)\in \mathbb{Z}_+^n$ and let $F^{\vec{x}}$ be a 2-dimensional SC-TQFT such that for $i\in[n]$, the restriction $F^{\vec{x}}_{x_i}$ corresponds to the special Frobenius algebra $A_{x_i}$ of dimension $x_i$ over $\mathbb{Z}_2$ defined in Example \ref{special2}. Define the cochain groups
$$\mathcal{C}^k(D,\vec{x})=\bigoplus_{\substack{\pi\in S_n\\ \text{inv}(\pi)=k}} F^{\vec{x}}(D^\pi).$$
Define a degree 1 codifferential $\delta^k:\mathcal{C}^k\rightarrow \mathcal{C}^{k+1}$ by 
$$\delta^k(v)=\bigoplus_{\sigma\gtrdot \pi} F^{\vec{x}}(C^{\pi,\sigma})(v)$$
for $v\in F^{\vec{x}}(D^\pi)$ with $\text{inv}(\pi)=k$. Let $\mathcal{H}^*(D,\vec{x})$ denote the cohomology groups of $\mathcal{C}^*(D,\vec{x})$. 
\label{SCTQFT} 
\end{defn}
Lemma \ref{vandcomplexlemma} shows that $\delta^2=0$ so this is indeed a cochain complex. Figure \ref{cochaincomplex} provides an illustration of the construction of $\mathcal{C}^*(D,\vec{x})$ for our running example with the diagram $T_{2,3}$. 

\begin{figure}[h]
\centering
\usebox\VandCplx
\caption{The complex $\mathcal{C}^*(D,\vec{x})$ is gotten from $[\![D]\!]$ by applying the SC-TQFT $F^{\vec{x}}$ and taking direct sums down ranks. Here we show the result for the case of the diagram $T_{2,3}$, continuing from Figure \ref{VandHom}.}
\label{cochaincomplex}
\end{figure}

\begin{notn}
Let $\mu_i$ and $\Delta_i$ denote the multiplication and comultiplication in $F^{\vec{x}}_{x_i}(\Circle_{x_i})=A_{x_i}$. As a slight abuse of notation, if a connected cobordism $C_r^l$ of color $i$ has $r$ incoming and $l$ outgoing boundary components, we will write its image under $F^{\vec{x}}$ as $F^{\vec{x}}(C_r^l)=\Delta_i^{l-1}\mu_i^{r-1}$, indicating that we multiply the $r$ tensor factors $A_{x_i}^{\otimes r}$ by multiplying $r-1$ times and then comultiply out to $l$ tensor factors $A_{x_i}^{\otimes l}$ by comultiplying $l-1$ times. This notation is well defined due to the associativity and coassociativity of the Frobenius algebra $A_{x_i}$, and the fact that we are using a special Frobenius algebra (so the genus of $C_r^l$ is irrelevant). We often omit subscripts on $\mu$ and $\Delta$ when it is painfully clear what the subscript should be by context. See Figure \ref{shortcut} for an example. 
\end{notn}
\begin{rmk} Since the Bruhat order is the face poset of a CW complex (see for example \cite[Corollary 2.7.14]{bjorner2006combinatorics}), there is a $\{+1,-1\}$-coloring of the edges of the Hasse diagram for which each diamond has an odd number of $-1$'s. Thus, the complex $\mathcal{C}^*(D,\vec{x})$ can be constructed over $\mathbb{Z}$ instead of $\mathbb{Z}_2$ by weighting per-edge maps with these signs. However more work needs to be done to make this explicit.
\label{zcoeff}
\end{rmk}

\begin{lemma} For any link diagram $D$ and any $\vec{x}\in\mathbb{Z}_+^n$, $\big(\mathcal{C}^*(D,\vec{x}),\delta\big)$ is a cochain complex.
\label{vandcomplexlemma}
\end{lemma}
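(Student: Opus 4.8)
The plan is to show $\delta^{k+1}\circ\delta^k=0$ by reducing the claim to a statement about individual diamonds in the Hasse diagram of the Bruhat order, and then checking that each such diamond maps, under $F^{\vec{x}}$, to a commutative square of $R$-modules. Recall from Theorem \ref{bruhatthin} that the Bruhat order on $S_n$ is a thin poset; hence every interval of length two is a diamond $[\pi,\gamma]$ with exactly two intermediate elements $\sigma,\eta$, and the composite $\delta^{k+1}\circ\delta^k$ restricted to the $\pi$-summand is the sum over all such $\gamma$ (with $\mathrm{inv}(\gamma)=k+2$) of $F^{\vec{x}}(C^{\sigma,\gamma})F^{\vec{x}}(C^{\pi,\sigma}) + F^{\vec{x}}(C^{\eta,\gamma})F^{\vec{x}}(C^{\pi,\eta})$. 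Since we are working over $\mathbb{Z}_2$ (per the complex as defined in Definition \ref{SCTQFT}; the signed $\mathbb Z$-version is deferred to Remark \ref{zcoeff}), it suffices to show that for each diamond the two composite cobordisms $C^{\sigma,\gamma}\circ C^{\pi,\sigma}$ and $C^{\eta,\gamma}\circ C^{\pi,\eta}$ have equal images under $F^{\vec{x}}$.

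First I would set up the combinatorics of a diamond. Given $\pi\lessdot\sigma,\eta\lessdot\gamma$, write $\sigma=\pi(i,j)$ and the other covers in terms of transpositions; the standard analysis of length-two intervals in the Bruhat order (as in \cite{bjorner2006combinatorics}) shows that such a diamond falls into one of a small number of combinatorial types according to how the two transpositions interact — either they act on disjoint pairs of positions, or they share a position, giving the ``three-cycle'' configurations depicted in the boxes \usebox\diamondone, \usebox\diamondtwo, \usebox\diamondthree. In each case I would track, color by color, how the smoothings $D^\pi_{x_t}, D^\sigma_{x_t}, D^\eta_{x_t}, D^\gamma_{x_t}$ change, and identify the two composite cobordisms $C^{\sigma,\gamma}_{x_t}\circ C^{\pi,\sigma}_{x_t}$ and $C^{\eta,\gamma}_{x_t}\circ C^{\pi,\eta}_{x_t}$ from $D^\pi_{x_t}$ to $D^\gamma_{x_t}$. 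Both are cobordisms in $\Cob_2$ between the same $1$-manifolds; the key point is simply that each is \emph{connected} (either built from connected genus-zero pieces glued along circles, or an identity composed with a connected piece) — a triangle-inequality-style count on the number of circles shows connectedness is preserved under the relevant compositions.

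The main technical engine is then Lemma \ref{specialTQFT}: since $F^{\vec{x}}_{x_t}$ corresponds to a special Frobenius algebra, any two connected cobordisms in $\Cob_2$ with the same source and target have the same image. Thus for every color $x_t$, $F^{\vec{x}}_{x_t}(C^{\sigma,\gamma}_{x_t}\circ C^{\pi,\sigma}_{x_t}) = F^{\vec{x}}_{x_t}(C^{\eta,\gamma}_{x_t}\circ C^{\pi,\eta}_{x_t})$; taking the monoidal (tensor) product over all colors $t\in[n]$ gives $F^{\vec{x}}(C^{\sigma,\gamma}\circ C^{\pi,\sigma}) = F^{\vec{x}}(C^{\eta,\gamma}\circ C^{\pi,\eta})$, so each diamond square commutes. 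Summing over diamonds, and using that each rank-$k$ element's contribution to $\delta^{k+1}\delta^k$ is grouped into such commuting pairs which cancel in pairs over $\mathbb{Z}_2$, yields $\delta^{k+1}\circ\delta^k=0$.

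I expect the main obstacle to be the bookkeeping in the diamond analysis: one must verify in each combinatorial type of length-two interval that, for every color, the two composite cobordisms are genuinely connected (so that Lemma \ref{specialTQFT} applies) and that there are no degenerate cases — for instance where a smoothing happens to have the same number of circles before and after, making a ``genus-zero connected'' cobordism an identity, or where the transposition data forces $\sigma(i)=\sigma(i)$-type coincidences so that one of the four edge cobordisms is an identity. Handling these edge cases uniformly — ideally by observing that identities are themselves connected cobordisms and that composition of connected cobordisms along a nonempty collection of circles is connected — is where the argument needs care, but once phrased correctly it collapses to a short appeal to Lemma \ref{specialTQFT} and functoriality/monoidality of $F^{\vec{x}}$.
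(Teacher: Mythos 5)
Your proposal is correct and follows essentially the same route as the paper's proof: reduce to commutativity on diamonds via thinness of the Bruhat order, split into the cases where the two covering transpositions share a position or act on disjoint pairs, argue color by color that both composite cobordisms from $D^\pi_{x_t}$ to $D^\gamma_{x_t}$ are connected (or both identities), and then invoke Lemma \ref{specialTQFT} together with $\mathbb{Z}_2$ coefficients. The paper also handles the identity-edge cases exactly as you anticipate, by noting that an identity composed with a connected genus-zero cobordism is still connected, so no additional ideas are needed beyond what you've outlined.
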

\begin{proof} Since each interval of length 2 in the Bruhat order is a diamond, it suffices to show that the codifferential commutes on each diamond (since our algebras are over $\mathbb{Z}_2$, commuting and anticommuting are equivalent). Consider a diamond in the Bruhat order and the corresponding diamond in $[\![D]\!]$ (shown in Figure \ref{fig:1}a and \ref{fig:1}b). We proceed by analyzing the following cases:
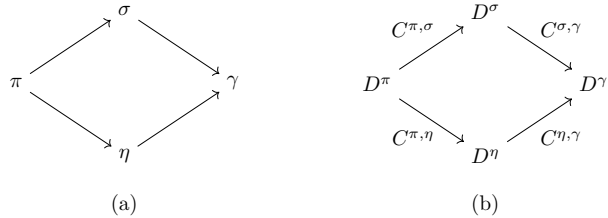
\begin{figure}[h]
\centering
\begin{tikzpicture}[scale=0.8, every node/.style={transform shape}]
\node at (2,2) {\usebox\bruhatdiamond};
\node at (2,0){(a)};
\node at (8,0){(b)};
\node at (8,2) {\usebox\diagramdiamond};
\end{tikzpicture}
\caption{A diamond in the Bruhat order and the corresponding diamond in the diagram $[\![D]\!]$.} \label{fig:1}
\end{figure}

\begin{case} {\it $\pi$ and $\gamma$ differ in 3 positions, $i,j$ and $k$ (in one-line notation)}. 
Without loss of generality we can write $\sigma=\pi (i,j)$ and $\gamma=\sigma (i,k)$. Now, $(i,j)(i,k)=(i,k,j)$ can be written in exactly two other ways as a product of transpositions, $(i,k)(j,k)=(i,k,j)=(j,k)(i,j)$ so our diamond is given by either Figure \ref{fig:2}a or \ref{fig:2}b.

\begin{figure}[h!]
\centering
\begin{tikzpicture}[scale=0.8, every node/.style={transform shape}]
\node at (2,2) {\usebox\diamondone};
\node at (2,0){(a)};
\node at (8,0){(b)};
\node at (8,2) {\usebox\diamondtwo};
\node at (14,2){\usebox\diamondthree};
\node at (14,0){(c)};
\end{tikzpicture}
\caption{In (a) and (b) we see the two possible diamonds which can appear in case 1. In (c) we see the only possible type of diamond which can appear in case 2.} \label{fig:2}
\end{figure}
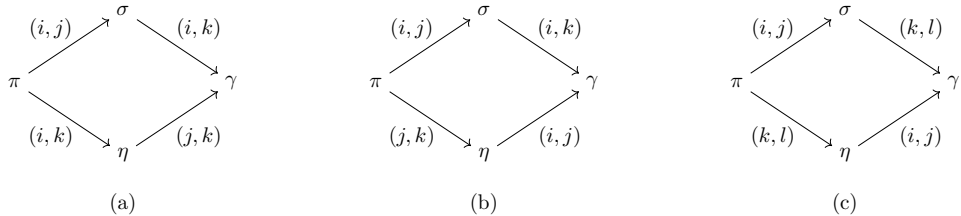

First consider the diamond in Figure \ref{fig:2}a. By construction, $C^{\pi,\sigma}_{x_i}$ is a connected genus zero cobordism from $D^\pi_{x_i}$ to $D^\sigma_{x_i}$ and $C_{x_i}^{\sigma,\gamma}$ is a connected genus zero cobordism from $D^\sigma_{x_i}$ to $D^\gamma_{x_i}$. Thus $C_{x_i}^{\sigma,\gamma}\circ C^{\pi,\sigma}_{x_i}$ is a connected cobordism from $D_{x_i}^\pi$ to $D_{x_i}^\gamma$. Going the other way, $C^{\pi,\eta}_{x_i}$ is a connected genus zero cobordism from $D^\pi_{x_i}$ to $D^\eta_{x_i}$ and $C_{x_i}^{\eta,\gamma}$ is the identity cobordism from $D^\eta_{x_i}$ to $D^\gamma_{x_i}$. Thus $C_{x_i}^{\eta,\gamma}\circ C^{\pi,\eta}_{x_i}$ is also a connected cobordism from $D_{x_i}^\pi$ to $D_{x_i}^\gamma$. Thus by Lemma \ref{specialTQFT}, $F^{\vec{x}}(C_{x_i}^{\sigma,\gamma}\circ C^{\pi,\sigma}_{x_i})=F^{\vec{x}}(C_{x_i}^{\eta,\gamma}\circ C^{\pi,\eta}_{x_i})$. The same argument works for the pieces colored $j$ and $k$. For $l\notin \{i,j,k\}$ things are even easier, as $C_{x_l}^{\sigma,\gamma}\circ C^{\pi,\sigma}_{x_l}=C_{x_l}^{\eta,\gamma}\circ C^{\pi,\eta}_{x_l}$ is by definition just the identity cobordism. A similar argument applies to the diamond in Figure \ref{fig:2}b.
\end{case}

\begin{case} {\it $\pi$ and $\gamma$ differ in 4 positions $i,j,k$ and $l$ (in one-line notation)}. Then our diamond looks like one shown in Figure \ref{fig:2}c. A similar argument works here as in case 1. 
\end{case}

Thus each diamond in $F^{\vec{x}}[\![D]\!]$ commutes. Now, for any $x\in \mathcal{C}^*(D,\vec{x})$ each contribution to $\delta^2(x)$ comes from a chain of length 2 in $F^{\vec{x}}[\![D]\!]$ and has an identical contribution coming from the other half of the diamond. Thus for any $x\in \mathcal{C}^*(D,\vec{x})$, all terms in $\delta^2(x)$ have even coefficients.  Since our algebras have coefficients in $\mathbb{Z}_2$, we find that $\delta^2=0$. 
\end{proof}

We now specialize our construction to the 2-torus link diagrams $T_{2,n}$. Recall $T_{2,n}$ denotes the closure of the diagram $\sigma_1^n$ in the braid group $B_2$ where $\sigma_1=\usebox\inlinecrossing \ $ (see figure \ref{fig:torusa}). It turns out (by Lemma \ref{height} and Theorem \ref{KirkMain}) that smoothings of the diagram $T_{2,n}$ have the appropriate combinatorics needed to categorify the Vandermonde determinant $V(\vec{x})$.
%
%
\begin{lemma}
Let $c_1,c_2,\dots, c_n$ be a total ordering of the crossings of $T_{2,n}$ and consider $\vec{\alpha}=(\alpha_1,\dots,\alpha_n)\in\{0,1\}^n$. Let $h(\vec{\alpha})=\sum_i\alpha_i$ denote the height of $\vec{\alpha}$. Then if $h(\vec{\alpha})>0$, the smoothing of $T_{2,n}$ corresponding to $\vec{\alpha}$ consists of $h(\vec{\alpha})$ disjoint circles.
\label{height}
\end{lemma}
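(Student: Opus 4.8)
The plan is to read off the answer directly from the description of $T_{2,n}$ as the braid closure of $\sigma_1^n\in B_2$. Recall (Figure \ref{zeroandone}) that smoothing the positive crossing $\sigma_1$ at the $0$-resolution yields the identity braid on two strands (two parallel vertical arcs), while the $1$-resolution yields the ``turnback'', in which the two bottom endpoints of the tile are joined by one arc and the two top endpoints are joined by another. So a choice of $\vec\alpha\in\{0,1\}^n$ produces the following picture: stack $n$ tiles from bottom to top, the $\ell$-th being an identity tile when $\alpha_\ell=0$ and a turnback tile when $\alpha_\ell=1$, and then close the braid with one arc down the far left and one arc down the far right. Being a resolution of a closed diagram, the result is a disjoint union of circles, so it remains only to count them. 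First I would observe that a maximal block of consecutive identity tiles merely lengthens the two strands running through it; hence, up to isotopy, the diagram — and in particular the number of circles — depends only on $h=h(\vec\alpha)$ and not on which levels carry the turnbacks, nor on the ordering $c_1,\dots,c_n$ of the crossings. (In particular this already gives the ordering-independence asserted in Theorem \ref{KirkMain}.)

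Next I would carry out the count, assuming $h\ge 1$ and letting the turnback tiles sit at levels $i_1<\dots<i_h$ (bottom to top). Tracing arcs through the intervening identity blocks: for each $j\in\{1,\dots,h-1\}$, the arc joining the two top endpoints of the turnback at level $i_j$ runs straight up to the arc joining the two bottom endpoints of the turnback at level $i_{j+1}$, and these three arcs together close into a single circle, which lies entirely between those two levels and is therefore disjoint from everything else — this gives $h-1$ circles. Below the lowest turnback, the arc joining its two bottom endpoints connects (through the identity block beneath it) the two bottom boundary points of the braid; symmetrically, above the highest turnback the two top boundary points are connected. The two braid-closure arcs then join bottom-left to top-left and bottom-right to top-right, so the bottom arc, the far-right closure arc, the top arc and the far-left closure arc assemble into exactly one further circle. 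Altogether the smoothing has $(h-1)+1=h$ disjoint circles, as claimed; I would accompany this with a figure in the spirit of Figure \ref{VandHom} displaying a representative $\vec\alpha$ with its circles labelled.

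The argument is short, and the only place that needs genuine care is this last step: checking that the two boundary arcs together with the two closure arcs form exactly one circle — not zero, and not two — and that this circle is genuinely distinct from the $h-1$ ``sandwiched'' ones. As a consistency check I would also note the quick induction on $h$: the case $h=1$ is precisely the boundary-arc analysis above (one circle, the identity blocks contributing no closed loops), and passing from $h$ to $h+1$ turnbacks pinches off exactly one new circle between the top two turnback tiles while leaving the rest of the diagram unchanged. I expect the direct count to be the cleanest write-up, with the induction relegated to a remark.
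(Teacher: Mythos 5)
Your direct count is correct, and it takes a genuinely different route from the paper. The paper argues by induction on $h(\vec\alpha)$: the base case $h=1$ is a single circle, and for $h>1$ one sets the lowest $1$ back to $0$, invokes the inductive hypothesis to get $h-1$ circles, and observes that restoring the $1$ splits one circle into two. Your argument instead enumerates all the circles explicitly: the $h-1$ circles pinched off between consecutive turnbacks, plus one outer circle built from the lowest turnback's bottom arc, the highest turnback's top arc, and the two closure strands. The direct count is arguably cleaner in that it exhibits each circle rather than relying on a splitting observation at the inductive step, and it also makes the ordering-independence asserted at the end of Theorem \ref{KirkMain} immediate rather than a corollary. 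The one thing your write-up does that the paper does not is the preliminary isotopy reduction (``only $h$ matters, not the positions of the turnbacks''); this is true, but note that you then re-derive the count in full generality with the turnbacks at arbitrary levels $i_1<\dots<i_h$, so the isotopy remark is not actually load-bearing and could be trimmed. The induction you sketch as a consistency check is essentially the paper's proof, just run in the direction $h\to h+1$ rather than $h-1\to h$.
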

\begin{proof} 
\begin{figure}[h]
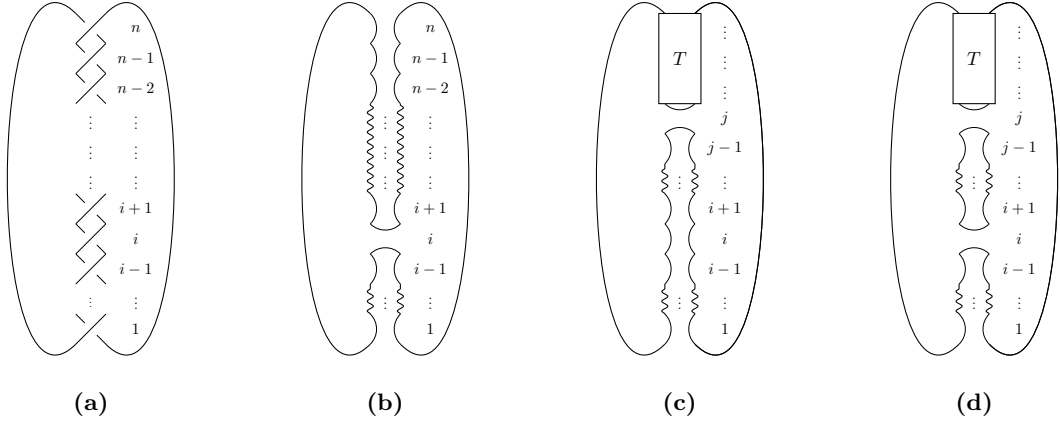

\begin{subfigure}{0.23\textwidth}
\centering
\usebox\torusproof
\caption{} \label{fig:torusa}
\end{subfigure}
\begin{subfigure}{0.23\textwidth}
\centering
\usebox\torusheightone
\caption{} \label{fig:torusb}
\end{subfigure}
\begin{subfigure}{0.23\textwidth}
\centering
\usebox\torusheightiminus
\caption{} \label{fig:torusc}
\end{subfigure}
\begin{subfigure}{0.23\textwidth}
\centering
\usebox\torusheighti
\caption{} \label{fig:torusd}
\end{subfigure}
\caption{(a) The diagram $T_{2,n}$. (b) A smoothing of $T_{2,n}$ with height 1. (c) The smoothing $\vec{\alpha}^\prime$ gotten by changing the first $1$ in $\vec{\alpha}$ to 0. (d) The smoothing $\vec{\alpha}$ from the proof of Lemma \ref{height}.}
\label{torussmoothingsdiagram}
\end{figure}
We proceed by induction on $h(\vec{\alpha})$. If $h(\vec{\alpha})=1$, then $\alpha_i=1$ for some $i\in [n]$ and $\alpha_j=0$ for each $j\neq i$. The smoothing corresponding to $\vec{\alpha}$ is shown in Figure \ref{fig:torusa} and consists of one circle. 

Now suppose that $h(\vec{\alpha})>1$, and choose $i$ minimally such that $\alpha_i=1$ and $j$ minimally such that $j>i$ and $\alpha_j=1$. Consider the tuple $\vec{\alpha}^\prime$ for which $\alpha_j^\prime=\alpha_j$ for $j\neq i$ and $\alpha^\prime_i=0$. The smoothings above the $j^{\text{th}}$ crossing form a trivial tangle which we will denote by $T$, and this tangle is the same in both the smoothing corresponding to $\vec{\alpha}$ and the smoothing corresponding to $\vec{\alpha}^\prime$. By induction, the smoothing corresponding to $\vec{\alpha}^\prime$ consists of $h(\vec{\alpha}^{\prime})=h(\vec{\alpha})-1$ disjoint circles. As shown in Figure \ref{fig:torusc} and \ref{fig:torusd}, changing the $i^{\text{th}}$ smoothing from 0 to 1 splits one circle into two. Thus the smoothing corresponding to $\vec{\alpha}$ consists of $h(\vec{\alpha})$ disjoint circles.
\end{proof}
\begin{theorem} The Euler characteristic of  $\mathcal{H}^*(T_{2,n},\vec{x})$ is equal to  the Vandermonde determinant $V(\vec{x})$:
$$\sum_{k\geq 0}(-1)^k\dim\tvkhcplx=V(\vec{x}).$$
Furthermore, the complex $\mathcal{C}^*(T_{2,n},\vec{x})$ does not depend on the ordering of the crossings of $T_{2,n}$. 
\label{KirkMain}
\end{theorem}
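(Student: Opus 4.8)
The plan is to treat the two assertions of Theorem~\ref{KirkMain} separately; both collapse onto Lemma~\ref{height} (a smoothing of $T_{2,n}$ of positive height $h$ is a disjoint union of exactly $h$ circles) together with the fact, already built into the construction, that a special TQFT only sees the number of boundary circles of a cobordism.

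\textbf{The Euler characteristic.} I would first invoke the Euler--Poincar\'e principle: for a cochain complex of finite-dimensional $\mathbb{Z}_2$-vector spaces, $\sum_{k\geq 0}(-1)^k\dim\mathcal{H}^k$ equals $\sum_{k\geq 0}(-1)^k\dim\mathcal{C}^k$, so it suffices to compute the latter for $\mathcal{C}^*(T_{2,n},\vec{x})$. Since $F^{\vec{x}}$ is monoidal and $D^\pi=D^\pi_{x_1}\amalg\cdots\amalg D^\pi_{x_n}$, we have $F^{\vec{x}}(D^\pi)\cong\bigotimes_{i=1}^n F^{\vec{x}}_{x_i}(D^\pi_{x_i})$. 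Now $D^\pi_{x_i}$ is the smoothing of $T_{2,n}$ with the first $\pi(i)$ crossings $1$-smoothed; since $\pi$ is a bijection of $[n]$ we always have $\pi(i)\geq 1$, so Lemma~\ref{height} applies and $D^\pi_{x_i}$ is a disjoint union of $\pi(i)$ circles. Hence $F^{\vec{x}}_{x_i}(D^\pi_{x_i})\cong A_{x_i}^{\otimes\pi(i)}$, of dimension $x_i^{\pi(i)}$ because $\dim_{\mathbb{Z}_2}A_{x_i}=x_i$ (Example~\ref{special2}), so $\dim\mathcal{C}^k(T_{2,n},\vec{x})=\sum_{\text{inv}(\pi)=k}\prod_{i=1}^n x_i^{\pi(i)}$. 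Therefore $\sum_{k\geq 0}(-1)^k\dim\mathcal{C}^k=\sum_{\pi\in S_n}(-1)^{\text{inv}(\pi)}\prod_{i=1}^n x_i^{\pi(i)}$, which, using $\text{sgn}(\pi)=(-1)^{\text{inv}(\pi)}$, is exactly expression~(\ref{eq1}), or equivalently its rank-graded form~(\ref{eq2}), for $V(\vec{x})$.

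\textbf{Independence of the ordering.} Let $c_1,\dots,c_n$ and $c_1',\dots,c_n'$ be two orderings of the crossings of $T_{2,n}$. For every $\pi\in S_n$ and color $x_i$, both orderings yield a smoothing consisting of $\pi(i)$ disjoint circles colored $x_i$ (Lemma~\ref{height}, using $\pi(i)\geq 1$); using commutativity and cocommutativity of $A_{x_i}$ to match up the tensor factors, the module $F^{\vec{x}}(D^\pi)$ — and hence each cochain group $\mathcal{C}^k$ — is the same in both cases. For a cover relation $\pi\lessdot\sigma$, the component $C^{\pi,\sigma}_{x_i}$ is the identity when $\pi(i)=\sigma(i)$ and otherwise a connected genus-zero cobordism from a union of $\pi(i)$ circles to a union of $\sigma(i)$ circles; since $F^{\vec{x}}_{x_i}$ corresponds to a special Frobenius algebra, Lemma~\ref{specialTQFT} forces $F^{\vec{x}}(C^{\pi,\sigma}_{x_i})$ to depend only on the pair $(\pi(i),\sigma(i))$ — concretely it is $\Delta_i^{\sigma(i)-1}\mu_i^{\pi(i)-1}$ — and so not on the chosen ordering. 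As the cover relations of the Bruhat order are themselves ordering-independent, the codifferential $\delta$, and hence the whole complex $\mathcal{C}^*(T_{2,n},\vec{x})$, is unchanged.

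\textbf{The main obstacle.} The delicate point is not either computation but making the identifications in the second part precise: for two different crossing orders the smoothings $D^\pi_{x_i}$ are honestly different subdiagrams of $T_{2,n}$, and one must check that the canonical ``$\pi(i)$ circles of color $x_i$'' identification is compatible with \emph{every} cobordism in $[\![T_{2,n}]\!]$ at once, including the ordering of tensor factors. This is exactly where Lemma~\ref{specialTQFT} does the work: because a special TQFT cannot distinguish two connected cobordisms with matching boundary-circle counts, every edge map is forced to depend only on the permutation data $(\pi,\sigma)$, so the bookkeeping closes. (For a general link diagram $D$ both assertions fail, since $s_i^\pi(D)$ then depends on \emph{which} crossings — not merely how many — are $1$-smoothed, and on their order; the torus diagram $T_{2,n}$ works precisely because $s_i^\pi(T_{2,n})=\pi(i)$, which is what pins the Euler characteristic to the genuine Vandermonde determinant rather than a generalized one.)
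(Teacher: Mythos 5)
Your proof is correct and follows essentially the same route as the paper: both parts reduce to Lemma~\ref{height} together with the Euler--Poincar\'e principle for the first claim, and for the second claim to the observation that the circle counts $\pi(i)$ (and hence, via Lemma~\ref{specialTQFT}, the edge maps $\Delta_i^{\sigma(i)-1}\mu_i^{\pi(i)-1}$) are independent of the crossing ordering. Your write-up is more explicit than the paper's, which dismisses the ordering-independence claim with ``it follows immediately,'' but the underlying argument is identical.
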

\begin{proof} By Lemma \ref{height}, the smoothing $(T_{2,n})^{\pi}_{x_i}$ consists of $s_i^{\pi}(T_{2,n})=\pi(i)$ circles of color $x_i$ and thus the algebra $F^{\vec{x}}(T_{2,n})^\pi_{\vec{x}}$ has dimension $x_1^{\pi(1)}x_2^{\pi(2)}\dots x_n^{\pi(n)}$,
and so the dimension of $\tvkcplx$ is equal to $\sum_{\inv(\pi)=k} x_1^{\pi(1)}\dots x_n^{\pi(n)}$. Thus we have
\begin{align*}
\sum_{k\geq 0} (-1)^k \dim \tvkhcplx
&=
\sum_{k\geq 0} (-1)^k \dim \tvkcplx \\
&=\sum_{k\geq 0} \left(\sum_{\text{inv}\pi=k}(-1)^{\text{inv}\pi} x_1^{\pi(1)}x_2^{\pi(2)}...x_n^{\pi(n)}\right)=V(\vec{x}).
\end{align*}
In fact, it follows immediately from Lemma \ref{height} and the definition of $\mathcal{C}^*(T_{2,n},\vec{x})$ that under any change in the ordering of the crossings, the complex $\mathcal{C}^*(T_{2,n},\vec{x})$ itself remains unchanged.
\end{proof}
%

The Theorem  \ref{KirkMain} states that using the 2-strand torus link diagram $T_{2,n}$ in our construction yields a categorification of the Vandermonde determinant. In case we remove the requirement about the knot and diagram type, Theorem \ref{genMain} states that given an arbitrary link diagrams our construction provides a categorifications of certain generalized Vandermonde determinants. 

\begin{theorem} Let $D$ be a link diagram  with a fixed ordering $c_1,\dots, c_n$ of the crossings. Let $s_k=s^{\textnormal{id}}_k(D)$ denote the number of circles in the smoothing of $D$ where crossings $c_1,\dots,c_k$ are given 1-smoothings and all others are given 0-smoothings. Then the Euler characteristic of $\vhcplx$ is equal to the generalized Vandermonde determinant:
\[\sum_{k\geq 0}(-1)^k\dim\vkhcplx = \det (x_i^{s_j})_{i,j=1}^n.\]
Furthermore, if $D$ has the property that the height of a smoothing determines the number of circles in the smoothing, then $\mathcal{C}^*(D,\vec{x})$ does not depend on the ordering of the crossings.\label{genMain}
\end{theorem}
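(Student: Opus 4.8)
The plan is to follow the proof of Theorem~\ref{KirkMain} essentially verbatim, with the single substitution that replaces the role of Lemma~\ref{height} by the tautology $s^\pi_i(D)=s_{\pi(i)}$. Recall that $D^\pi_{x_i}$ is by definition the smoothing of $D$ in which exactly the crossings $c_1,\dots,c_{\pi(i)}$ receive the $1$-smoothing; but that is precisely the smoothing whose circle count is called $s_{\pi(i)}$. Since $F^{\vec{x}}_{x_i}$ corresponds to $A_{x_i}$, which is $x_i$-dimensional over $\Z_2$, and since $F^{\vec{x}}$ is monoidal, we get $F^{\vec{x}}(D^\pi)=\bigotimes_{i=1}^n A_{x_i}^{\otimes s_{\pi(i)}}$, of dimension $\prod_{i=1}^n x_i^{s_{\pi(i)}}=x_1^{s_{\pi(1)}}\cdots x_n^{s_{\pi(n)}}$. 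Hence $\dim\vkcplx=\sum_{\inv(\pi)=k}x_1^{s_{\pi(1)}}\cdots x_n^{s_{\pi(n)}}$.

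First I would invoke Lemma~\ref{vandcomplexlemma}, which already applies to an arbitrary link diagram, to know that $\vcplx$ really is a cochain complex, so that its Euler characteristic is defined and (over the field $\Z_2$, by the usual rank--nullity telescoping) equals the alternating sum of the dimensions of the $\vkcplx$. Then I would substitute the dimension formula above and regroup the resulting double sum over $k$ and over $\{\pi:\inv(\pi)=k\}$ into a single sum over $S_n$:
\[
\sum_{k\geq 0}(-1)^k\dim\vkhcplx=\sum_{k\geq 0}(-1)^k\dim\vkcplx=\sum_{\pi\in S_n}(-1)^{\inv(\pi)}\,x_1^{s_{\pi(1)}}\cdots x_n^{s_{\pi(n)}}.
\]
Because $(-1)^{\inv(\pi)}=\operatorname{sgn}(\pi)$, the right-hand side is exactly the Leibniz expansion $\det(x_i^{s_j})_{i,j=1}^n$, which gives the first assertion. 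This is literally the computation in the proof of Theorem~\ref{KirkMain}, with $\pi(i)$ replaced by $s_{\pi(i)}$.

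For the second assertion, assume $D$ has the property that the height of a smoothing determines its number of circles. Then for \emph{any} ordering of the crossings the number $s_k$ is one and the same, namely the common circle count of all height-$k$ smoothings; in particular $s^\pi_i(D)=s_{\pi(i)}$ does not depend on the chosen ordering. Consequently the cochain groups $\vkcplx=\bigoplus_{\inv(\pi)=k}\bigotimes_i A_{x_i}^{\otimes s_{\pi(i)}}$ coincide for any two orderings, and the index poset (the Bruhat order on $S_n$) is purely combinatorial and so certainly ordering-independent. It remains to check the per-edge maps: each component $F^{\vec{x}}(C^{\pi,\sigma}_{x_i})$ is either the identity (when $\pi(i)=\sigma(i)$, in which case $D^\pi_{x_i}$ and $D^\sigma_{x_i}$ are literally the same object), or the image of the unique \emph{connected genus-zero} cobordism between $D^\pi_{x_i}$ and $D^\sigma_{x_i}$, which by associativity and coassociativity of $A_{x_i}$ equals $\Delta_i^{\,s_{\sigma(i)}-1}\mu_i^{\,s_{\pi(i)}-1}$; in both cases the map is a function of the integers $s_{\pi(i)},s_{\sigma(i)}$ alone. (Specialness of $F^{\vec{x}}$, via Lemmas~\ref{specialTQFT} and~\ref{vandcomplexlemma}, is what guarantees a well-defined complex in the first place.) Hence $\delta$ is unchanged, and therefore so is $\vcplx$.

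I do not expect a serious obstacle: the theorem is essentially bookkeeping built on the observation $s^\pi_i(D)=s_{\pi(i)}$. The point requiring the most care is the second assertion, where I would make explicit that ``height determines circle count'' is exactly the hypothesis needed to make the prefix statistics $s^\pi_i$ independent of the crossing order, and would note --- as is already the case for $T_{2,n}$, where $s_0(T_{2,n})=2$ --- that $s_0$ (the all-$0$-smoothing) never enters the complex, since $\pi(i)\geq 1$ for all $\pi\in S_n$, so the hypothesis is only ever needed for smoothings of positive height.
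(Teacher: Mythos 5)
Your proposal is correct and follows the same route as the paper's: the paper's proof of Theorem~\ref{genMain} is literally ``repeat the proof of Theorem~\ref{KirkMain}, with $\dim F^{\vec{x}}(D^\pi)=x_1^{s_{\pi(1)}}\cdots x_n^{s_{\pi(n)}}$,'' and your write-up fills in exactly those steps (the tautology $s_i^\pi(D)=s_{\pi(i)}$, the Euler-characteristic telescoping, the Leibniz expansion, and the observation that the hypothesis on heights makes the $s_k$ and hence the chain groups and per-edge maps ordering-independent). Your added remark that $s_0$ never enters because $\pi(i)\geq 1$ is a nice sanity check, consistent with the paper's use of Lemma~\ref{height} only for positive height.
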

\begin{proof} The proof is the same as the proof of Theorem \ref{KirkMain} with the exception that the dimension of the algebra $F^{\vec{x}}(D^\pi)$ is now equal to $x_1^{s_{\pi(1)}}\dots x_n^{s_{\pi(n)}}$. 
\end{proof}
\subsection{Functoriality}
\label{functsection}
Given $\vec{x}\in\Z_+^n$ and a link diagram $D$ with $n$ crossings, one obtains a generalized Vandermonde determinant which we will denote $V_D(\vec{x})=\det(x_i^{s_j})_{i,j=1}^n$ as determined by Theorem \ref{genMain}. Thus the diagram $D$ determines a function $V_D:\Z_n^+\rightarrow \mathbb{Z}$ sending $\vec{x}\mapsto V_D(\vec{x})$. In Section \ref{mainresult}, we upgraded $V_D$ to a function $\Z_n^+\rightarrow \Ob(\grab)$ to the objects of the category of graded abelian groups, via $\vec{x}\mapsto \vhcplx$. In this section, we categorify this function to obtain a functor. We identify each $\vec{x}\in\Z_+^n$ as a collection of $n$ dots on a vertical axis labeled by the $x_i$, with the dot corresponding to $x_j$ located at the coordinate $j$. We construct a category $\Zndiag$ with objects $Z_n^+$ and morphisms as certain diagrams (see Definition \ref{functdiagrams}) connecting these dots. Given any morphism from $\vec{x}$ to $\vec{y}$ in $\Zndiag$ we construct graded homomorphisms $\mathcal{H}^*(D,\vec{x})\to\mathcal{H}^*(D,\vec{y})$, yielding a functor $\mathcal{V}_D:\Zndiag\to\grab$.

\begin{defn}
Define the category $\Zndiag$ to have:
\begin{itemize}
\item Objects $\Ob(\Zndiag)=Z_n^+$
\item Given $\vec{x},\vec{y}\in Z_n^+$ morphisms from $\vec{x}$ to $\vec{y}$ are isotopy classes of diagrams in the infinite strip $[0,1]\times \R$ in the $uv$-plane with $\vec{x}$ located at the boundary $u=0$, $\vec{y}$ at located at the boundary $u=1$, and a collection of arcs connecting a subcollection of points in $\vec{x}$ and a subcollection of points in $\vec{y}$. Arcs between $\vec{x}$ and $\vec{y}$ must start at a point $x_i$ of $\vec{x}$ and end at a point $y_j$ of $\vec{y}$ such that $i\leq j$ and $x_i=y_j$. We also allow the possibility of having a finite number of labeled dots in the interior region $0<u<1$ of a diagram. 
\item Compose morphisms $A\in \Hom(\vec{x},\vec{y})$ and $B\in \Hom(\vec{y},\vec{z})$ by concatenating horizontally along $\vec{y}$ and applying the following:
\begin{enumerate}
\item Rescale in the $u$ direction by a factor of $\frac{1}{2}$ so the resulting diagram again lives in $[0,1]\times\R$.
\item Any arc with an endpoint in the middle of a diagram contracts to the endpoint on the boundary.
\end{enumerate}
\end{itemize}
\label{functdiagrams}
\end{defn}

For an example of a typical composition of morphisms in the category $\Zndiag$, see Figure \ref{fig:16}.
\begin{figure}[h]
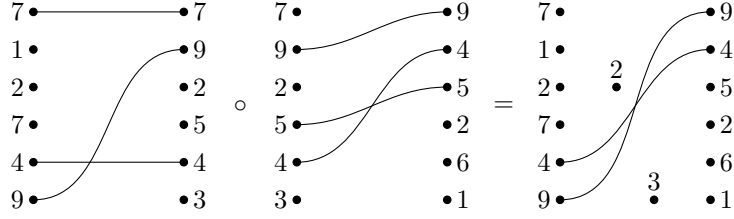

\centering
\usebox\functorialcomp
\caption{A typical composition of morphisms in the category $\Zndiag$ with $n=6$.}
\label{fig:16}
\end{figure}
Given a morphism $A$ in $\Zndiag$ from $\vec{x}=(x_1,\dots,x_n)$ to $\vec{y}=(y_1,\dots,y_n)$, we now construct a chain map from $\vcplx$ to $\mathcal{C}^*(D,\vec{y})$. We do this by first constructing, for each $\pi\in S_n$, a 
colored cobordism $C^\pi(A)$ from $D^\pi_{\vec{x}}$ to 
$D^\pi_{\vec{y}}$ for each $\pi\in S_n$ according to the following rules: 
\begin{itemize}
\item If $x_i$ has no arc connected to it, we use the unique connected genus zero cobordism from $D^\pi_{x_i}$ to $\varnothing$. This cobordism is assigned the color $x_i$.
\item If $i=j$, and there is an arc from $x_i$ to $y_j$, then $D^{\pi}_{x_i}$ is equal to 
$D^{\pi}_{y_j}$ and we connect $D^{\pi}_{x_i}$ to $D^{\pi}_{y_j}$ with the identity cobordism of color $x_i=y_j$. 
\item If $i< j$,  and there is an arc from $x_i$ to $y_j$ then we connect $D^{\pi}_{x_i}$ to $D^{\pi}_{y_j}$ with the unique connected genus zero cobordism of color $x_i=y_j$. 
\item If there is no arc connected to $y_j$, we use the unique connected genus zero cobordism from $\varnothing$ to $D^{\pi}_{y_j}$ of color $y_j$.
\item A colored dot on the interior region of a diagram becomes a sphere in $C^\pi(A)$ of the same color.
\end{itemize}
The next lemma follows directly from the definition of $C^\pi(A)$:
\begin{lemma} Let $I_{\vec{x}}$ denote the identity morphism on $\vec{x}$ in $\Zndiag$. Then 
\begin{enumerate}
\item $C^\pi(I_{\vec{x}})$ is the identity cobordism on $D^{\pi}$.
\item Given composable morphisms $A,B$ in $\Zndiag$, there is a color preserving bijection between connected components of $C^\pi(A\circ B)$ and $C^\pi(A)\circ C^\pi(B)$ which preserves number of in-boundary components and number of out-boundary components (but possibly does not preserve genus).
\end{enumerate}
\label{functoriallemma}
\end{lemma}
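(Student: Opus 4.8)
The plan is to read off both statements directly from the five-case recipe that defines $C^\pi(-)$, using the fact from Section~\ref{sec4} that $\Cob_2^{\vec x}$ splits as a product over colors, so that colored cobordisms can be analyzed one connected component at a time. The guiding observation is that every connected component of a colored cobordism built by this recipe is produced by exactly one of the five bullets and is attached to a single feature of the underlying $\Zndiag$-diagram: an arc, a free endpoint, or an interior dot. Since the paper only ever feeds these cobordisms into the SC-TQFT $F^{\vec x}$, which by Lemma~\ref{specialTQFT} depends on a connected cobordism only through its color and its numbers of in- and out-boundary circles, I would track exactly the triple (color, number of in-boundary circles, number of out-boundary circles) for each component and deliberately ignore genus.

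For Part 1, I would simply note that $I_{\vec x}$ is the diagram with $n$ horizontal arcs, the $i$-th joining $x_i$ to $y_i=x_i$ with equal positions $i=j$, and with no free endpoints and no interior dots. The second bullet of the construction assigns to each such arc the identity cobordism on $D^\pi_{x_i}$, so $C^\pi(I_{\vec x})$ is the disjoint union of these identities, which is precisely the identity cobordism on $D^\pi=D^\pi_{x_1}\amalg\cdots\amalg D^\pi_{x_n}$.

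For Part 2, fix composable morphisms $A\colon\vec x\to\vec y$ and $B\colon\vec y\to\vec z$, so that $C^\pi(A)\circ C^\pi(B)$ is formed by gluing $C^\pi(A)$ to $C^\pi(B)$ along $D^\pi_{\vec y}=D^\pi_{y_1}\amalg\cdots\amalg D^\pi_{y_n}$. The first thing I would establish is the structural fact that each connected component of $C^\pi(A)$ meets $D^\pi_{\vec y}$ in the circles of at most one point $y_j$ (and likewise for $C^\pi(B)$): an arc $x_i\to y_j$ produces a component whose out-boundary is exactly $D^\pi_{y_j}$, a free $y_j$ produces a component whose out-boundary is exactly $D^\pi_{y_j}$, and the remaining components (those from a free $x_i$ or from an interior dot of $A$) do not touch $D^\pi_{\vec y}$. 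Hence the components of $C^\pi(A)\circ C^\pi(B)$ are: for each $y_j$, the union of the unique $C^\pi(A)$-component carrying $D^\pi_{y_j}$ on its out-boundary with the unique $C^\pi(B)$-component carrying $D^\pi_{y_j}$ on its in-boundary; together with the leftover components of $C^\pi(A)$ (free $x_i$'s and interior dots of $A$) and of $C^\pi(B)$ (free $z_k$'s and interior dots of $B$). I would then match these against $C^\pi(A\circ B)$ by running the four subcases at each $y_j$ in parallel with the concatenation-and-contraction rule of Definition~\ref{functdiagrams}: if $y_j$ carries an arc on both sides, the two arcs join to an arc $x_i\to z_k$ in $A\circ B$ and both sides get a component of type $(s_i^\pi(D),s_k^\pi(D))$; if $y_j$ carries an arc only in $A$ (say $x_i\to y_j$), the contraction rule frees $x_i$ in $A\circ B$ and both sides get a component of type $(s_i^\pi(D),0)$; the mirror case frees $z_k$ and gives type $(0,s_k^\pi(D))$; and if $y_j$ is free on both sides it becomes an interior dot of $A\circ B$, i.e.\ a sphere, matching the closed component obtained by gluing a cup to a cap, of type $(0,0)$. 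The leftover components are immediate, since a free $x_i$ or $z_k$, or an interior dot of $A$ or $B$, persists unchanged in $A\circ B$. In every case the color and the two boundary counts agree, which gives the asserted bijection.

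The argument is essentially bookkeeping and I do not expect a genuine obstacle; the one point needing care is precisely the genus caveat in the statement. Gluing two genus-zero cobordisms along several circles generically raises the genus — capping a cup along more than one circle already yields a positive-genus closed surface — so the correspondence above really is only a bijection of components preserving color and the in/out boundary counts, not the full diffeomorphism type. I would be careful to flag this explicitly, since it is exactly the slack that makes the induced maps $\mathcal{H}^*(D,\vec x)\to\mathcal{H}^*(D,\vec y)$ compatible with composition after applying $F^{\vec x}$, which ignores genus by Lemma~\ref{specialTQFT}.
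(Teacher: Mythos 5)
Your proposal is correct; the paper actually omits a proof of this lemma entirely, remarking only that it ``follows directly from the definition of $C^\pi(A)$,'' and your case-by-case verification is exactly the bookkeeping that remark is eliding. You correctly identify the one substantive point — gluing two connected genus-zero cobordisms along the $s_j^\pi(D)$ circles of $D^\pi_{y_j}$ generically raises genus, so the correspondence is a bijection of components preserving color and boundary counts but not a diffeomorphism — which is precisely why the statement is phrased with the genus caveat and why the induced maps only compose correctly after passing through the special colored TQFT of Lemma~\ref{specialTQFT}.
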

\begin{defn}
For each $A\in\Mor(\vec{x},\vec{y})$ define the map $A_{\#}:\vcplx\to\mathcal{C}^*(D,\vec{y})$ via
\[A_{\#}: \ v\mapsto F^{\vec{x}\oplus\vec{y}}\big(C^\pi(A)\big)(v)\]
for $v\in F^{\vec{x}}D^\pi_{\vec{x}}$ and where $F^{\vec{x}\oplus\vec{y}}$ is the SC-TQFT on the category $\Cob_2^{\{x_1,\dots,x_n,y_1,\dots,y_n\}}$ taking $\Circle_{x_i}$ to $A_{x_i}$ and $\Circle_{y_j}$ to $A_{y_j}$ for $i,j\in[n]$. 
\end{defn}

In Lemma \ref{functchainmap} we show that  $A_{\#}$ is a chain map. Thus we get an induced map on cohomology which we denote $\mathcal{V}_D(A):\mathcal{H}^*(D,\vec{x})\to \mathcal{H}^*(D,\vec{y})$.

\begin{rmk} It follows immediately from Lemma \ref{functoriallemma} part 2 that for composable morphisms $A,B$ in $\Zndiag$, we have $F^{\vec{x}\oplus\vec{y}}(A\circ B)=F^{\vec{x}\oplus\vec{y}}(A)\circ F^{\vec{x}\oplus\vec{y}}(B)$.\label{functlemmarmk}
\end{rmk}

\begin{lemma} For each $A\in\Mor(\vec{x},\vec{y})$, $A_{\#}: \mathcal{C}^*(D,\vec{x})\to \mathcal{C}^*(D,\vec{y})$ is a chain map. 
\label{functchainmap}
\end{lemma}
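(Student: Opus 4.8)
The plan is to show that $A_\#$ commutes with the codifferentials, i.e. that for each cover relation $\pi\lessdot\sigma$ in the Bruhat order and each $v\in F^{\vec{x}}(D^\pi_{\vec{x}})$ we have $F^{\vec{x}\oplus\vec{y}}(C^{\pi,\sigma}_{\vec{y}})\circ F^{\vec{x}\oplus\vec{y}}(C^\pi(A))(v) = F^{\vec{x}\oplus\vec{y}}(C^\sigma(A))\circ F^{\vec{x}\oplus\vec{y}}(C^{\pi,\sigma}_{\vec{x}})(v)$. Summing over all $\sigma\gtrdot\pi$ then gives $\delta^k_{\vec{y}}\circ A_\# = A_\#\circ\delta^k_{\vec{x}}$. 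Since the SC-TQFT $F^{\vec{x}\oplus\vec{y}}$ is monoidal and splits as a product over colors, it suffices to check this color by color: for each $i\in[n]$ one compares the cobordism $C^{\pi,\sigma}_{y_*}\circ C^\pi(A)_{x_i\text{-piece}}$ with $C^\sigma(A)_{x_i\text{-piece}}\circ C^{\pi,\sigma}_{x_i}$, both viewed as morphisms in $\Cob_2^{\{x_i,y_j\}}$ from $D^\pi_{x_i}$ to $D^\sigma_{y_j}$ (where $y_j$ is the target of the arc out of $x_i$, if there is one).

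The key step is a connectivity argument in the spirit of Lemma \ref{specialTQFT}. First I would dispose of the easy colors: if $x_i$ has no arc connected to it, then both composites factor through $F^{\vec{x}\oplus\vec{y}}$ applied to a cobordism into $\varnothing$, and one checks directly they agree (indeed $C^\sigma(A)$ on this color is the cap on $D^\sigma_{x_i}$, and $C^{\pi,\sigma}_{x_i}$ is genus-zero connected, so the two composites are each a connected cobordism $D^\pi_{x_i}\to\varnothing$; if $D^\sigma_{x_i}=\varnothing$ both sides are $0$ unless $D^\pi_{x_i}=\varnothing$, etc.). Dually for $y_j$ with no incoming arc. When there is an arc from $x_i$ to $y_j$ (so $x_i=y_j$ and $i\le j$), both $C^{\pi,\sigma}_{x_i}, C^{\pi,\sigma}_{y_j}$ and the cobordism-piece of $C^\pi(A), C^\sigma(A)$ are each either an identity cobordism or a connected genus-zero cobordism between nonempty $1$-manifolds; hence both composites $C^{\pi,\sigma}_{y_j}\circ C^\pi(A)_i$ and $C^\sigma(A)_i\circ C^{\pi,\sigma}_{x_i}$ are \emph{connected} cobordisms from $D^\pi_{x_i}$ to $D^\sigma_{y_j}$ (connectedness is preserved by gluing two connected cobordisms along a common boundary component, which is the nonempty object $D^\pi_{x_i}$ or $D^\sigma_{x_i}$ — here I should record the small point that in our diagrams $D^\pi_{x_i}$ is nonempty whenever $\pi(i)>0$, which is automatic, and in the degenerate case both sides agree trivially). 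By Lemma \ref{specialTQFT}, $F^{\vec{x}\oplus\vec{y}}$ sends any two connected cobordisms between the same objects to the same map, so the two composites of color $i$ have equal image under $F^{\vec{x}\oplus\vec{y}}$. Finally, a colored dot in the interior of $A$ contributes the same sphere factor (hence the same scalar $F(\text{sphere})$) to $C^\pi(A)$ for every $\pi$, so these factors match on both sides and do not affect the argument.

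The main obstacle I anticipate is bookkeeping rather than mathematical depth: carefully matching up which boundary components of $C^\pi(A)$ get glued to which components of $C^{\pi,\sigma}_{\vec{y}}$ (and symmetrically on the $\vec{x}$ side), and confirming in every case that the glued cobordism is connected with the correct in- and out-boundary, so that Lemma \ref{specialTQFT} applies. This is exactly the content of Lemma \ref{functoriallemma} part 2 (the color-preserving bijection of connected components preserving boundary counts), together with Remark \ref{functlemmarmk}, so the cleanest route is to reduce the whole statement to those two facts plus Lemma \ref{specialTQFT}: by Remark \ref{functlemmarmk}, $F^{\vec{x}\oplus\vec{y}}$ is functorial on the relevant composites, and by Lemma \ref{functoriallemma}(2) each connected component of the two composite cobordisms $C^\sigma(A)\circ C^{\pi,\sigma}_{\vec{x}}$ and $C^{\pi,\sigma}_{\vec{y}}\circ C^\pi(A)$ is connected with matching boundary data, so Lemma \ref{specialTQFT} applied component-wise gives equality of their images. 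Hence $A_\#$ is a chain map.
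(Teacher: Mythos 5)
Your core argument --- showing $A_\#$ commutes with each per-edge map by checking color-by-color, observing that both composites attached to a given boundary piece are connected cobordisms between the same colored objects, and invoking Lemma \ref{specialTQFT} --- is correct and is essentially the paper's proof; the paper merely spells out the same reasoning in six explicit cases according to whether $\pi(i)=\sigma(i)$ and whether (and where) an arc leaves $x_i$. Your handling of interior dots as sphere factors common to both sides is also fine.

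The one misstep is the closing paragraph. The suggestion to reduce everything to Lemma \ref{functoriallemma}(2) together with Remark \ref{functlemmarmk} does not go through as stated: that lemma and remark compare $C^\pi(A\circ B)$ with $C^\pi(A)\circ C^\pi(B)$ for two composable morphisms $A,B$ of $\Zndiag$, i.e.\ they record functoriality of $B\mapsto C^\pi(B)$ in the $\Zndiag$-direction. The chain-map condition, however, asks you to compare $C^\sigma(A)\circ C^{\pi,\sigma}_{\vec{x}}$ with $C^{\pi,\sigma}_{\vec{y}}\circ C^\pi(A)$, where the cobordisms $C^{\pi,\sigma}$ come from Bruhat cover relations and are \emph{not} of the form $C^\pi(B)$ for any morphism $B$ of $\Zndiag$. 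So Lemma \ref{functoriallemma}(2) is not directly applicable here, and you cannot dodge the connectivity check you already gave; that check (or the paper's explicit case analysis) is genuinely needed.
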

\begin{proof} We must show that for each $\pi\in S_n$ and $v\in F^{\vec{x}}\big(D^\pi_{\vec{x}}\big)$, we have $\big(\delta A_{\#}\big)(v)=\big(A_{\#}\delta\big)(v)$. It suffices to show that for each $\sigma\in S_n$ which covers $\pi$ in the Bruhat order, we have the equality
\begin{equation}
C^\sigma(A)\circ C^{\pi,\sigma}_{\vec{x}}=C^{\pi,\sigma}_{\vec{y}}\circ C^\pi(A).\label{cobfunct}\end{equation}
or at least
\begin{equation}
F^{\vec{x}\oplus\vec{y}}\big(C^\sigma(A)\circ C^{\pi,\sigma}_{\vec{x}}\big)=F^{\vec{x}\oplus\vec{y}}\big(C^{\pi,\sigma}_{\vec{y}}\circ C^\pi(A)\big).\label{weakcobfunct}\end{equation}
Without loss of generality we can assume there are no floating dots in the interior of $A$. Thus there are no closed connected (floating) components of either side of equation (\ref{cobfunct}). It suffices to check for each boundary piece $D^{\pi}_{x_i}$, $D^{\sigma}_{y_j}$ for $i,j\in[n]$, the image under $F^{\vec{x}\oplus\vec{y}}$ is the same. In the following six cases we consider the cobordism attached to $D^{\pi}_{x_i}$ via either side of equation (\ref{cobfunct}):

\begin{case} {\it $\pi(i)=\sigma(i)$, no arc connected to $x_i$ in A}. The cobordism attached to $D^{\pi}_{x_i}$ via either side of equation (\ref{cobfunct}) consists of a connected genus zero cobordism from $D^\pi_{x_i}$ to $\varnothing$. 
\end{case}

\begin{case} {\it  $\pi(i)=\sigma(i)$, $i< j$, arc connected from $x_i$ to $y_j$ in A.} The cobordism attached to $D^{\pi}_{x_i}$ via either side of equation (\ref{cobfunct}) consists of a connected genus zero cobordism from $D^\pi_{x_i}$ to $D^\sigma_{y_j}$. 
\end{case}

\begin{case} {\it $\pi(i)=\sigma(i)$, $i=j$ arc connected from $x_i$ to $y_j$ in A.} The cobordism attached to $D^{\pi}_{x_i}$ via either side of equation (\ref{cobfunct}) consists of the identity cobordism from $D^\pi_{x_i}$ to $D^\sigma_{y_j}$. 
\end{case}

\begin{case} {\it  $\pi(i)\neq\sigma(i)$, no arc connected to $x_i$ in A.} The cobordism attached to $D^{\pi}_{x_i}$ via either side of equation (\ref{cobfunct}) consists of a connected cobordism from $D^\pi_{x_i}$ to $\varnothing$. Thus (\ref{cobfunct}) is not necessarily satisfied but by Lemma \ref{specialTQFT}, (\ref{weakcobfunct}) is.
\end{case}

\begin{case} {\it  $\pi(i)\neq\sigma(i)$, $i< j$ arc connected from $x_i$ to $y_j$ in A.} The cobordism attached to $D^{\pi}_{x_i}$ via either side of equation (\ref{cobfunct}) consists of a connected cobordism from $D^\pi_{x_i}$ to $D^\sigma_{y_j}$. Thus (\ref{cobfunct}) is not necessarily satisfied but by Lemma \ref{specialTQFT}, (\ref{weakcobfunct}) is.
\end{case}

\begin{case} {\it  $\pi(i)\neq\sigma(i)$, $i= j$ arc connected from $x_i$ to $y_j$ in A.} The cobordism attached to $D^{\pi}_{x_i}$ via either side of equation (\ref{cobfunct}) consists of a connected genus zero cobordism from $D^\pi_{x_i}$ to $D^\sigma_{y_j}$. 
\end{case}

It remains to repeat these six cases for the cobordism attached to $D^{\sigma}_{y_j}$ via either side of equation (\ref{cobfunct}). However, by symmetry of the definitions, the proofs in these cases are the same as those shown above. 
\end{proof}

\begin{theorem}
For any link diagram $D$, the correspondence $\mathcal{V}_D:\Zndiag\to \grab$ sending objects $\vec{x}\mapsto \mathcal{V}_D(\vec{x})=\mathcal{H}^*(D,\vec{x})$, and morphisms $A\mapsto \mathcal{V}(A)$, is a functor.
\end{theorem}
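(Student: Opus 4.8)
The plan is to verify the three defining properties of a functor in turn: that $\mathcal{V}_D$ is well defined on morphisms (independent of the representative diagram of an isotopy class), that it preserves identities, and that it preserves composition. All three will reduce to facts already in hand, chiefly Lemma \ref{functchainmap} (which says $A_\#$ is a chain map, so $\mathcal{V}_D(A):=[A_\#]$ makes sense on cohomology), Lemma \ref{functoriallemma}, Lemma \ref{specialTQFT}, and the functoriality of the SC-TQFT $F^{\vec x\oplus\vec y}$. Along the way one also records the trivial point that $\mathcal{V}_D(A)$ is a degree-$0$ graded map — since $C^\pi(A)$ runs from $D^\pi_{\vec x}$ to $D^\pi_{\vec y}$ for the \emph{same} $\pi$, the map $A_\#$ sends $\mathcal{C}^k(D,\vec x)$ to $\mathcal{C}^k(D,\vec y)$ — so $\mathcal{V}_D(A)$ is indeed a morphism in $\grab$.

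First I would dispatch well-definedness and identities. For well-definedness, note that the colored cobordism $C^\pi(A)$ is built purely from isotopy-invariant combinatorial data of the diagram $A$: which endpoint $x_i$ is matched to which $y_j$ (necessarily with $i\le j$ and $x_i=y_j$), which endpoints are unmatched, and the number and colors of the floating interior dots. Since $F^{\vec x\oplus\vec y}$ is defined on diffeomorphism classes of cobordisms, $A_\#$ — and hence $\mathcal{V}_D(A)$ — depends only on the isotopy class of $A$. For identities, Lemma \ref{functoriallemma}(1) gives that $C^\pi(I_{\vec x})$ is the identity cobordism on $D^\pi$ for every $\pi$; applying the functor $F^{\vec x}$ yields $(I_{\vec x})_\#=\mathrm{id}$ on each summand $F^{\vec x}(D^\pi)$, hence $(I_{\vec x})_\#=\mathrm{id}_{\mathcal{C}^*(D,\vec x)}$ and $\mathcal{V}_D(I_{\vec x})=\mathrm{id}_{\mathcal{H}^*(D,\vec x)}$.

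The main step is composition. Given $A\in\Mor(\vec x,\vec y)$ and $B\in\Mor(\vec y,\vec z)$, it suffices (by functoriality of cohomology) to establish the cochain-level identity obtained by following $A_\#$ with $B_\#$. Work inside a single SC-TQFT $F^{\vec x\oplus\vec y\oplus\vec z}$ built from the Frobenius algebras $A_{x_i},A_{y_j},A_{z_k}$; it restricts to $F^{\vec x\oplus\vec y}$, $F^{\vec y\oplus\vec z}$, and $F^{\vec x\oplus\vec z}$ on the respective subcategories, so all three of $A_\#$, $B_\#$, $(A\circ B)_\#$ are computed by this one functor. Fix $\pi\in S_n$. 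By Lemma \ref{functoriallemma}(2) there is a color-preserving bijection between the connected components of $C^\pi(A\circ B)$ and those of $C^\pi(A)\circ C^\pi(B)$ respecting the numbers of in- and out-boundary components (though possibly not the genus); moreover each matched component is monochromatic, since arcs join only points of equal value, so the middle colors $y_j$ occurring in $C^\pi(A)\circ C^\pi(B)$ are harmless. Because our TQFT is special, Lemma \ref{specialTQFT} says the image under $F$ of a connected cobordism of a given color is determined by its numbers of in- and out-boundary components alone, so the genus discrepancy is invisible. Hence
\[
F^{\vec x\oplus\vec y\oplus\vec z}\big(C^\pi(A\circ B)\big)=F^{\vec x\oplus\vec y\oplus\vec z}\big(C^\pi(A)\circ C^\pi(B)\big)=F^{\vec x\oplus\vec y\oplus\vec z}\big(C^\pi(A)\big)\circ F^{\vec x\oplus\vec y\oplus\vec z}\big(C^\pi(B)\big),
\]
the second equality being functoriality of $F^{\vec x\oplus\vec y\oplus\vec z}$ (cf. Remark \ref{functlemmarmk}). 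Reading this on the relevant colored subcategories and evaluating on $v\in F^{\vec x}(D^\pi_{\vec x})$ gives $(A\circ B)_\#$ equal to $A_\#$ followed by $B_\#$, i.e. the composition axiom $\mathcal{V}_D(A\circ B)=\mathcal{V}_D(A)\circ\mathcal{V}_D(B)$ after passing to cohomology.

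The only step needing genuine care is this composition axiom, and its substantive content — the possible genus mismatch flagged in Lemma \ref{functoriallemma}(2) and the bookkeeping of colors around the contracted middle object $\vec y$ — has already been isolated into Lemmas \ref{functoriallemma} and \ref{specialTQFT}. So what remains is assembly rather than any new idea; the well-definedness and identity checks are routine once one observes that $C^\pi(\cdot)$ sees only isotopy-invariant data.
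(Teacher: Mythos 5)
Your proof is correct and follows essentially the same route as the paper: identities via Lemma~\ref{functoriallemma}(1), composition via Lemma~\ref{functoriallemma}(2) together with Remark~\ref{functlemmarmk}, then post-compose with the cohomology functor. You are somewhat more careful than the paper in two places — you explicitly invoke Lemma~\ref{specialTQFT} to dispose of the possible genus mismatch that the paper buries in Remark~\ref{functlemmarmk}, and you work inside a single SC-TQFT $F^{\vec x\oplus\vec y\oplus\vec z}$, which tidies up the paper's slightly loose use of $F^{\vec x\oplus\vec y}$ when three color sets are actually in play — but the underlying argument is identical.
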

\begin{proof}
By Lemma \ref{functoriallemma} part 1, $\mathcal{V}_D(I_{\vec{x}})$ is the identity map on $\vcplx$. By lemma \ref{functoriallemma} part 2 and Remark \ref{functlemmarmk}, for any $\pi\in S_n$, any composable morphisms $A,B$ in $\Zndiag$, and $w\in F^{\vec{x}}D^\pi_{\vec{x}}$, we have
\begin{align*}
(A\circ B)_{\#}(w)&=F^{\vec{x}\oplus\vec{y}}\big(C^\pi(A\circ B)\big)(w)\\
&=F^{\vec{x}\oplus\vec{y}}\big(C^\pi(A)\circ C^\pi(B)\big)(w)\\
&=\bigg(F^{\vec{x}\oplus\vec{y}}\big(C^\pi(A)\big)\circ F^{\vec{x}\oplus\vec{y}}\big(C^\pi(B)\big)\bigg)(w)\\
&=\big(A_{\#}\circ B_{\#}\big)(w)
\end{align*}
Thus the correspondence $\vec{x}\mapsto \mathcal{C}^*(D,\vec{x})$, and $A\mapsto A_{\#}$, is a functor. Post-composing with the cohomology functor gives the desired result.
\end{proof}

%

\section{Categorifying Determinants of Positive Integer Valued Matrices}
\label{categarbit}

Below, we describe our construction which works for all positive integer valued matrices, but has somewhat less interesting differentials and no combinatorial/topological interpretation. With the definition
$$\text{det}(m_{i,j})_{i,j=1}^n=\sum_{\pi\in S_n}(-1)^{\inv(\pi)}m_{1,\pi(1)}\dots m_{n,\pi(n)}$$
in mind, we apply the same process as in Section \ref{Sectionthree}, however skipping the detour through the world of colored cobordisms.

\begin{defn}
Let $M=(m_{i,j})_{i,j=1}^n$ be a matrix with entries in $\Z_+$. Let $\mathcal{A}=\{A_n\}_{n\in\Z_+}$ be a family of $\Z_2$-bialgebras $(A_n, \mu_n,\eta_n, \Delta_n,\varepsilon_n)$ where $\dim A_n=n$ for each $n\in\Z_+$. Construct a cochain complex $\mathcal{C}^*(M,\mathcal{A})$ via the following steps:
\begin{enumerate}[label=\arabic*.]
\item Start with the Hasse diagram of the (strong) Bruhat order on $S_n$. 
\item Replace each vertex $\pi\in S_n$ by the algebra
$A_\pi=A_{m_{1,\pi(1)}}\otimes \dots \otimes A_{m_{n,\pi(n)}}$. 
\item For each cover relation $\pi\lessdot\pi(i,j)$, replace the edge from $\pi$ to $\pi(i,j)$ by a map $f^{\pi,\pi(i,j)}:A_{\pi}\to A_{\pi(i,j)}$ by the rules:
\subitem \textbullet \ Apply the identity on all tensor factors $A_{m_{k,\pi(k)}}$ for which $k\notin\{i,j\}$.
\subitem \textbullet \ On the two tensor factors which do change we apply the maps $$A_{m_{i,\pi(i)}}\xrightarrow{\eta_{m_{i,\pi(j)}}\varepsilon_{{m_{i,\pi(i)}}}}A_{m_{i,\pi(j)}}, \ \ \ \ \ \ \ A_{m_{j,\pi(j)}}\xrightarrow{\eta_{m_{i,\pi(i)}}\varepsilon_{{m_{j,\pi(i)}}}}A_{m_{j,\pi(i)}}.$$
\item Define the cochain groups $\mathcal{C}^k(M,\mathcal{A})$ and a degree 1 codifferential $d^k:\mathcal{C}^k(M,\mathcal{A})\rightarrow \mathcal{C}^{k+1}(M,\mathcal{A})$ by 
$$\mathcal{C}^k(M,\mathcal{A})=\bigoplus_{\substack{\pi\in S_n\\ \text{inv}(\pi)=k}} A_{\pi},\ \ \ \  \ \ \ \ \ d^k(v)=\bigoplus_{\sigma\gtrdot \pi} f^{\pi,\sigma}(v)$$
for $v\in A_{\pi}$ with $\text{inv}(\pi)=k$. Let $\mathcal{H}^*(M,\mathcal{A})$ denote the cohomology groups of $\mathcal{C}^*(M,\mathcal{A})$. 
\end{enumerate}
\label{categarbitdefn}
\end{defn}
\begin{lemma} The complex $\mathcal{C}^*(M,\mathcal{A})$ constructed in Definition \ref{categarbitdefn} is indeed a cochain complex.
\end{lemma}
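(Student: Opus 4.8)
The plan is to mimic the proof of Lemma \ref{vandcomplexlemma}, reducing everything to a diagram-chase on each diamond of the Bruhat order. Since the algebras $A_n$ have coefficients in $\Z_2$, it suffices to show that $d^2=0$, and since every interval of length $2$ in the Bruhat order is a diamond (Theorem \ref{bruhatthin}), it suffices to show that for each diamond $\pi\lessdot\sigma,\eta\lessdot\gamma$ the square of maps $f^{\sigma,\gamma}\circ f^{\pi,\sigma}$ and $f^{\eta,\gamma}\circ f^{\pi,\eta}$ agree. Unlike the colored-cobordism setting, here there is no appeal to Lemma \ref{specialTQFT}; instead the maps are built tensor-factor-wise out of the structure maps $\eta$ and $\varepsilon$, so commutativity on a diamond will be checked directly on the affected tensor factors.

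First I would recall, exactly as in the discussion following Definition \ref{bruhat}, that the two possible diamond shapes arise from the two factorizations of a $3$-cycle (Figure \ref{fig:2}a, \ref{fig:2}b) when $\pi$ and $\gamma$ differ in three positions $i,j,k$, and the single shape (Figure \ref{fig:2}c) when they differ in four positions $i,j,k,l$. In the four-position case the two paths around the diamond act on two disjoint pairs of tensor factors in opposite orders, and since maps acting on disjoint tensor factors commute, the square commutes on the nose. So the real content is the three-position case.

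For the three-position case, consider the diamond of Figure \ref{fig:2}a: $\sigma=\pi(i,j)$, $\gamma=\sigma(i,k)$, $\eta=\pi(i,k)$, and also $\gamma=\eta(j,k)$. On the tensor factor at position $l\notin\{i,j,k\}$ both composites are the identity. On each of the three factors $i,j,k$, each composite is a composition of two maps of the form $\eta_{m_{a,?}}\varepsilon_{m_{a,?}}$ (or an identity), and the key observation is that for any $\Z_2$-bialgebra structure maps, $\varepsilon\circ\eta$ is multiplication by a scalar and, more to the point, a composite $\eta_{r}\varepsilon_{q}\circ \eta_{q}\varepsilon_{p}$ equals $\eta_{r}\varepsilon_{p}$ up to the scalar $\varepsilon_q\eta_q\in\Z_2$; tracking the subscripts through the two paths of the diamond shows the source and target factors $A_{m_{a,\pi(a)}}$ and $A_{m_{a,\gamma(a)}}$ match on both sides (this is precisely the bookkeeping that multiplying on the right by transpositions permutes which $m_{a,\cdot}$ appears). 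Hence both composites equal the same map $\bigotimes_a \eta_{m_{a,\gamma(a)}}\varepsilon_{m_{a,\pi(a)}}$ (tensored with identities), possibly times a scalar in $\Z_2$; since we work over $\Z_2$, any discrepancy is absorbed. The diamond of Figure \ref{fig:2}b is handled identically after relabeling.

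Finally, exactly as in the last paragraph of the proof of Lemma \ref{vandcomplexlemma}, commutativity on every diamond means every contribution to $d^2(v)$ from a length-$2$ chain is matched by an identical contribution from the other half of its diamond, so all terms of $d^2(v)$ have even coefficient and therefore vanish mod $2$. I expect the main obstacle to be purely the index bookkeeping in the three-position case—verifying that the subscripts $m_{a,\pi(a)}$, $m_{a,\sigma(a)}$, $m_{a,\eta(a)}$, $m_{a,\gamma(a)}$ line up correctly so that the two paths genuinely have the same source and target on each tensor factor—rather than anything conceptually deep; once the subscripts are pinned down, the equality of maps is forced by working over $\Z_2$.
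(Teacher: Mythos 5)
Your overall plan is the same as the paper's: reduce, just as in Lemma \ref{vandcomplexlemma}, to checking that each diamond of the Bruhat order commutes, and do the check tensor-factor by tensor-factor. Your treatment of the four-position case (disjoint tensor factors commute on the nose) and your index bookkeeping in the three-position case are both correct. However, there is one step where your reasoning, as written, is not sound.

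In the three-position case, on the tensor factor at position $i$, one path through the diamond acts nontrivially in both steps while the other path acts nontrivially only once (with an identity in the other step). Concretely, one side gives $\eta_{m_{i,\gamma(i)}}\circ\varepsilon_{m_{i,\pi(i)}}$ directly, while the other gives
\[
\eta_{m_{i,\gamma(i)}}\circ\bigl(\varepsilon_{q}\circ\eta_{q}\bigr)\circ\varepsilon_{m_{i,\pi(i)}}, \qquad q=m_{i,\sigma(i)},
\]
with the intermediate scalar $\varepsilon_q\circ\eta_q\in\Z_2$. You conclude that the two sides "equal the same map $\ldots$ possibly times a scalar in $\Z_2$; since we work over $\Z_2$, any discrepancy is absorbed." That last step is false: if $\varepsilon_q\circ\eta_q=0$, then one side of the diamond vanishes and the other does not, and $d^2\neq 0$. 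Working over $\Z_2$ lets you ignore \emph{signs}, but not a multiplicative factor of $0$ versus $1$. What actually saves the argument is the bialgebra counit axiom, $\varepsilon_n\circ\eta_n = 1_{\Z_2}$ for every $n$, which forces the scalar to be exactly $1$. This is precisely the fact the paper singles out as the only change needed from the proof of Lemma \ref{vandcomplexlemma}; it replaces the role that Lemma \ref{specialTQFT} played in the colored-cobordism setting. With the scalar pinned down by the bialgebra axiom rather than waved away, your proof matches the paper's.
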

\begin{proof} By the definition of a bialgebra, we have $\epsilon_n\eta_n=1$ for each $n\in\Z_+$. The proof of this lemma is a slight modification of the proof of Lemma \ref{vandcomplexlemma} using this fact. \end{proof}
\begin{theorem} For any family $\mathcal{A}=\{A_n\}_{n\in\Z_+}$ of $\Z_2$-bialgebras with $\dim A_n=n$ for each $n\in\Z_+$, 
\[\sum_{k\geq 0}(-1)^k\dim\mathcal{H}^k(M,\mathcal{A})=\det(M).\]
\end{theorem}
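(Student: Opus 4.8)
The plan is to mirror the Euler characteristic computation in the proof of Theorem \ref{KirkMain}, but now working directly with the algebras $A_\pi$ rather than with images of smoothings under a colored TQFT. The key observation is that the Euler characteristic only sees dimensions, and the cohomology computation factors through the chain complex: $\sum_k(-1)^k\dim\mathcal{H}^k(M,\mathcal{A})=\sum_k(-1)^k\dim\mathcal{C}^k(M,\mathcal{A})$, which holds for any cochain complex of finite-dimensional vector spaces (rank-nullity applied repeatedly). Having already established in the preceding lemma that $\mathcal{C}^*(M,\mathcal{A})$ is a cochain complex, I may use this freely.

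First I would unwind the definition of $\mathcal{C}^k(M,\mathcal{A})$. By construction, $\mathcal{C}^k(M,\mathcal{A})=\bigoplus_{\text{inv}(\pi)=k}A_\pi$ where $A_\pi=A_{m_{1,\pi(1)}}\otimes\dots\otimes A_{m_{n,\pi(n)}}$. Since $\dim A_m=m$ for each $m\in\Z_+$ and dimension is multiplicative over tensor products, $\dim A_\pi=\prod_{i=1}^n m_{i,\pi(i)}$. Therefore
\[
\sum_{k\geq 0}(-1)^k\dim\mathcal{C}^k(M,\mathcal{A})=\sum_{k\geq 0}(-1)^k\sum_{\substack{\pi\in S_n\\\text{inv}(\pi)=k}}\prod_{i=1}^n m_{i,\pi(i)}=\sum_{\pi\in S_n}(-1)^{\text{inv}(\pi)}\prod_{i=1}^n m_{i,\pi(i)}.
\]
Recalling that $(-1)^{\text{inv}(\pi)}=\text{sgn}(\pi)$ (noted in Section \ref{sec2}), the right-hand side is precisely the Leibniz formula $\det(M)=\sum_{\pi\in S_n}(-1)^{\text{inv}(\pi)}m_{1,\pi(1)}\dots m_{n,\pi(n)}$ displayed at the start of this section. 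Combining with the Euler-characteristic equality above yields the claim.

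There is essentially no obstacle here: the content of the theorem is entirely in having set up the chain groups so that $\dim A_\pi$ reproduces the monomial appearing in the Leibniz expansion, and in knowing (from the previous lemma) that the differential squares to zero so that the alternating sum of dimensions is a genuine Euler characteristic. The only point worth spelling out carefully is the first equality — that passing to cohomology does not change the alternating sum of dimensions — which follows because $\dim\mathcal{C}^k=\dim(\ker d^k)+\dim(\operatorname{im}d^k)$ and $\dim\mathcal{H}^k=\dim(\ker d^k)-\dim(\operatorname{im}d^{k-1})$, and the telescoping $\sum_k(-1)^k\dim(\operatorname{im}d^k)$ terms cancel against the $\sum_k(-1)^k\dim(\operatorname{im}d^{k-1})$ terms. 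Everything else is bookkeeping identical in structure to the proof of Theorem \ref{KirkMain}, just without the topological detour through Lemma \ref{height}.
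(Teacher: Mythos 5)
Your proof is correct and takes essentially the same approach as the paper, which simply says to repeat the argument from Theorem \ref{KirkMain}: compute $\dim A_\pi$, pass from the alternating sum of chain-group dimensions to the alternating sum of cohomology dimensions, and recognize the Leibniz formula. You have merely spelled out the details that the paper leaves implicit.
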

\begin{proof} Repeat the proof of Theorem \ref{KirkMain} in the present context.\end{proof}


While this theorem appears more general than Theorem \ref{KirkMain}, the differentials used in the complex $\mathcal{C}^*(M,\mathcal{A})$ are somewhat trivial, and the cohomology groups are nearly the same size as the cochain groups. For this reason, one might desire a family $\{A_n\}_{n\in\Z_+}$ of algebras with more interesting maps $A_n\to A_m$. The (generalized) Vandermonde determinants were chosen as the focus of this paper because in the case that each row of our matrix consists of powers of the same number $x$, we can interpret the corresponding algebras as tensor powers of some fixed algebra $A_x$, and doing so allows for more interesting maps between algebras, utilizing the multiplication and comultiplication maps.

\section{Future Directions}
\label{lastsection}

This paper presents the first example of a cohomology theory categorifying a determinant. Hopefully the technique presented in this paper will lead to categorifications of other interesting determinants. We end with a list of questions related to the constructions presented in this paper. Questions 1-4 are the most general, relating to possible extensions or generalizations of the theory presented here. Questions 5-6 are related to Theorem \ref{genMain}.

\begin{enumerate}
\item Is there an upgrade to the construction presented in Section \ref{Sectionthree} using graded algebras in which we recover $V(\vec{x})$ as a polynomial by taking a graded Euler characteristic (as opposed to recovering an evaluation of a polynomial by taking an ordinary Euler characteristic)?
\item Is it possible to upgrade the construction presented in Section \ref{Sectionthree} so as to obtain a complex whose cohomology is a link invariant?
\item Applying a cofactor expansion along the last column in $V(\vec{x})=\det(x_i^j)_{i,j=1}^n$ yields
\begin{equation} V(\vec{x})=\sum_{k=1}^n(-1)^{n+k}x_k^n\ V\big(\vec{x}(i)\big)\label{cofactorexp}\end{equation}
where $V\big(\vec{x}(i)\big)=\det (x_i^j)_{(i,j)\in([n]\setminus \{k\})\times [n-1]}$ is the Vandermonde determinant in variables $\vec{x}(i)=(x_1,\dots,\widehat{x_i},\dots x_n)$ and is categorified by the complex $\mathcal{C}^*(T_{2,n-1},\vec{x}(i))$. Is there a resolution of $\tvcplx$ which categorifies the relation (\ref{cofactorexp})?
\item The first property one typically learns about the Vandermonde determinant is the product formula:
\begin{equation}
V(\vec{x})=x_1\dots x_n\prod_{i<j}(x_j-x_i).\label{eq10}
\end{equation}
One could consider categorifying the Vandermonde determinant using equation (\ref{eq10}) instead of (\ref{eq2}). This could be accomplished with a family of algebras $\{A_n\}_{n\in\N}$ with $\dim A_n=n$ by lifting the difference $x_j-x_i$ to the complex $A_{j,i}^*=\Cone(A_{x_j}\xrightarrow{\psi_{j,i}}A_{x_i})$
for some appropriate choice of maps $\psi_{j,i}$. For example, if the algebras $A_k$ are endowed with unit $\eta_k$ and counit $\varepsilon_k$, we could take $\psi_{j,i}=\eta_i\circ\varepsilon_j$. Then the complex
$
\wtilde{\mathcal{C}^*}(\vec{x})=A_{x_1}\otimes\dots\otimes A_{x_n}\bigotimes_{i<j}A_{j,i}^*,
$
has Euler characteristic $V(\vec{x})$. 
One may ask whether there are any interesting relations between $\wtilde{\mathcal{C}^*}(\vec{x})$ and the complex $\tvcplx$ or perhaps the complex constructed in Section \ref{categarbit} for the case of $V(\vec{x})$.
\item In the representation theory of $S_n$, quotients of generalized Vandermonde determinants by the Vandermonde determinant $V(\vec{x})$ in the same variables can be expressed as a sum of symmetric functions with coefficients related to characters of certain representations of $S_n$. Can this relation be formulated on the level of the complexes $\mathcal{C}^*(D,\vec{x})$? 
\item Which types of links admit diagrams with the property that the height of a smoothing determines the number of circles in the smoothing? 

\end{enumerate}

\bibliography{main.bib}

\end{document}